\newcommand{\Sh}{{\mathrm{Sh}}}
\newcommand{\cl}{{\mathrm{cl}}}
\newcommand{\rd}{{\mathrm{red}}}
\newcommand{\scr}[1]{\EuScript{#1}}
\newcommand{\Q}{\mathbb{Q}}
\newcommand{\Z}{\mathbb{Z}}
\newcommand{\R}{\mathbb{R}}
\newcommand{\A}{\mathbb{A}}
\newcommand{\C}{\mathbb{C}}
\newcommand{\OO}{\scr{O}}
\newcommand{\an}{\mathrm{an}}
\newcommand{\ip}[1]{\langle #1 \rangle}
\newcommand{\End}{\mathop{\mathrm{End}}\nolimits}
\newcommand{\Tate}{\underline{\mathrm{Tate}}}
\newcommand{\Sp}{\mathop{\mathrm{Sp}}\nolimits}
\renewcommand{\div}{\mathrm{div}}
\author{Nick Ramsey} \address{Department of Mathematics, 
  University of Michigan}
\email{naramsey@umich.edu} 
\title{The Overconvergent Shimura Lifting} 
\thanks{This research is supported in part by NSF Grant DMS-0503264}
\begin{document}
\frontmatter

\begin{abstract}
  We construct a rigid-analytic map from the the author's
  half-integral weight cuspidal eigencurve (see
  \cite{hieigencurve}) to its integral weight counterpart that
  interpolates the classical Shimura lifting.
\end{abstract}

\maketitle
\tableofcontents

\section{Introduction}

In \cite{shimura}, Shimura discovered the following remarkable
connection between holomorphic eigenforms of half-integral weight and
their integral weight counterparts.
\begin{theo}\label{classicallift}
  Let $F$ be a nonzero holomorphic cusp form of level $4N$, weight
  $k/2\geq 5/2$ and nebentypus $\chi$.  Assume that $F$ is an
  eigenform for $T_{\ell^2}$ and $U_{\ell^2}$ for all primes $\ell$
  with eigenvalues $\alpha_{\ell}$.  Then there exists a nonzero
  holomorphic cusp form $f$ of weight $k-1$, level $2N$, and character
  $\chi^2$ that is an eigenform for all $T_{\ell}$ and $U_{\ell}$ with
  eigenvalues $\alpha_{\ell}$.
\end{theo}
\noindent Strictly speaking, Shimura had only conjectured that $f$ is
of level $2N$, but this was proven shortly thereafter by Niwa in
\cite{niwa} for weights at least $7/2$ and then by Cipra for all weights
at least $5/2$ in \cite{cipra}.

Let $p$ be an odd prime and let $N$ be a positive integer with $p\nmid
N$.  In \cite{hieigencurve} the author constructed a rigid analytic
space $\widetilde{D}$ (denoted $\widetilde{D}^0$ there) parameterizing
all finite-slope systems of eigenvalues of Hecke operators acting on
overconvergent cuspidal $p$-adic modular forms of half-integral weight
and tame level $4N$.  Let $D$ denote the integral weight cuspidal
eigencurve of tame level $2N$ constructed, for example, in
\cite{buzzardeigenvarieties}.  In this paper we construct a
rigid-analytic map $\Sh:\widetilde{D}_\rd\longrightarrow D_\rd$ that
interpolates the Shimura lifting in the sense that if
$x\in\widetilde{D}$ is a system of eigenvalues occurring on a
\emph{classical} cusp form $F$ of half-integral weight (and such points are
shown to be Zariski-dense in $\widetilde{D}$) then $\Sh(x)$ is the
system of eigenvalues associated to the classical Shimura lifting of
$F$.

\section{Modular forms of  half-integral weight}

Fix an odd prime $p$ and let $\scr{W}$ denote $p$-adic weight space
over $\Q_p$.  We briefly recall a few facts and bits of notation
concerning $\scr{W}$.  See Section 2.4 of \cite{hieigencurve} for more
details.  The $K$-valued points of $\scr{W}$ (for a complete extension
$K/\Q_p$) correspond to continuous characters
$\kappa:\Z_p\longrightarrow K^\times$.  Each $\kappa\in\scr{W}(K)$
factors uniquely as $\kappa = \tau^i\cdot \kappa'$ where $\tau$ is the
Teichmuller character, $i$ is an integer well-defined modulo $p-1$,
and $\kappa'$ is trivial on $\mu_{p-1}\subseteq \Z_p^\times$.  The
space $\scr{W}$ is accordingly the admissible disjoint union of $p-1$
subspaces $\scr{W}^i$ for $0\leq i< p-1$.  Each $\scr{W}^i$ is
isomorphic to the open unit ball $B(1,1)$ around $1$ under the map
$$\kappa\longmapsto \kappa(1+p).$$
Also, $\scr{W}$ is the rising union
of the nested sequence of admissible open affinoids $\{\scr{W}_n\}$
whose points are those $\kappa$ with $|\kappa(1+p)^{p^{n-1}}-1|\leq
|p|$.  For an integer $i$ with $0\leq i< p-1$ and a positive integer
$n$ we set $\scr{W}^i_n = \scr{W}^i\cap \scr{W}_n$.  Finally, if
$\lambda\geq 0$ is an integer, we will usually denote the
$\Q_p$-valued point $x\longmapsto x^\lambda$ of $\scr{W}$ simply by
$\lambda$.

Let $N$ be a positive integer not divisible by the odd prime $p$.
Given a $p$-adic weight $\kappa\in\scr{W}(K)$, with $K$ and complete
and discretely-valued extension of $\Q_p$, and an $r\in[0,r_n]\cap \Q$,
we introduced in \cite{hieigencurve} the Banach space
$\widetilde{M}_\kappa(4N,K,p^{-r})$ of half-integral weight $p$-adic
modular forms of tame level $4N$ and weight $\kappa$ defined over $K$.
Here $\{r_n\}$ is the decreasing sequence of positive rational numbers
introduced in \cite{buzzardeigenvarieties} and \cite{hieigencurve},
the details of which will be of no importance to us in this paper.
This space is endowed with a continuous action of the Hecke operators
$T_{\ell^2}$ ($\ell\nmid 4Np$) and $U_{\ell^2}$ ($\ell\mid 4Np$), as
well as the tame diamond operators $\ip{d}_{4N}$, ($d\in
(\Z/4N\Z)^\times$).  A cuspidal subspace
$\widetilde{S}_\kappa(4N,K,p^{-r})$ is also defined, and is equipped
with all of the same operators.  In this section we will define a
space of classical modular forms of half-integral weight and use it
and results of \cite{hieigencurve} to define the classical subspaces of
$\widetilde{M}_\kappa(4N,K,p^{-r})$ and its cuspidal analog.  We will
then recast the classical Shimura lifting (Theorem
\ref{classicallift}) in these terms.
\begin{rema}
  The classical subspaces considered in this paper are limited in the
  sense that we restrict our attention to classical forms of level
  $4Np$.  One can also include classical forms of higher level $4Np^m$
  into the above spaces of $p$-adic forms.  We omit these forms here
  in part because we have no real need for them, and in part because
  we have not proven an analog of our control theorem (Theorem
  \ref{control}) for such forms (though we expect such a result to hold).
\end{rema}

For any positive integer $M$, let $\Sigma_{4M}$ be the $\Q$-divisor on
the algebraic curve $X_1(4M)_{\Q}$ given by $$\Sigma_{4M} =
\frac{1}{4}\pi^*[\mathbf{c}]$$
where $\mathbf{c}$ is the cusp on (the
coarse moduli scheme) $X_1(4)_{\Q}$ corresponding to the pair
$(\Tate(q),\zeta_4q_2)$ and
$$\pi:X_1(4M)_{\Q}\longrightarrow X_1(4)_{\Q}$$
is the natural map.
This divisor $\Sigma_{4M}$ is set up to look like the divisor of zeros
of the pullback of the Jacobi theta-function $\theta$ to $X_1(4M)_{\Q}$.
Indeed, if $F$ is a meromorphic function on $X_1(4M)^\an_\C$, then
$F\theta^k$ is a holomorphic modular form of weight $k/2$ if and only
if $\div(F)\geq -k\Sigma_{4M}$.  

Let $C_{4M}$ be the divisor on $X_1(4M)_{\Q}$ given by the sum of the
cusps at which $\Sigma_{4M}$ has integral coefficients (this includes,
in particular, all cusps outside of the support of $\Sigma_{4M}$).  If
$F$ is a meromorphic function on $X_1(4M)^\an_{\C}$, then $F\theta^k$
is a cuspidal modular form of weight $k/2$ if and only if $\div(F)\geq
-k\Sigma_{4M}+C_{4M}$.  The reason for omitting the cusps at which
$\Sigma_{4M}$ has non-integral coefficients is that, since $\div(F)$
has integral coefficients, $F\theta^k$ automatically vanishes at such
a cusp as soon as it is holomorphic there.

\begin{defi}
  Let $k$ be an odd positive integer.  The space of classical modular
  forms of weight $k/2$ and level $4M$ over $K$ is defined by
  $$\widetilde{M}_{k/2}^{\cl}(4M,K) =
  H^0(X_1(4M)^\an_K,\OO(k\Sigma_{4M})),$$
  and the subspace of cusp
  forms is defined by $$\widetilde{S}_{k/2}^{\cl}(4M,K) =
  H^0(X_1(4M)_K^\an,\OO(k\Sigma_{4M} - C_{4M})).$$
\end{defi}
\mbox{}\\
Both of these spaces are endowed with a geometrically defined action
of the Hecke operators $T_{\ell^2}$ ($\ell\nmid 4M$) and $U_{\ell^2}$
($\ell\mid 4M$) as well as the diamond operators $\ip{d}$ ($d\in
(\Z/4M\Z)^\times$).  The construction of the Hecke operators is a
``twisted'' version of the usual pull-back/push-forward through the
Hecke correspondence where one must multiply by a well-chosen rational
function (essentially the ratio of the pull-backs of $\theta^k$
through the maps defining the correspondence) on the source space of
the correspondence.  This construction is carried out in Section 6 of
\cite{mfhi} and Section 5 of \cite{hieigencurve} (where it is applied
to a slightly different space of forms).  The diamond operators are
simply given by pull-back with respect to the corresponding
automorphisms of $X_1(4M)_{\Q}$.

Suppose now that $M=Np$ with $p$ and odd prime not dividing $N$.  For
reasons of $p$-adic weight character book-keeping we separate the
diamond action into two kinds of diamond operators using the Chinese
remainder theorem.  For $d\in(\Z/p\Z)^\times$ we define $\ip{d}_p$ to
be $\ip{d'}$ where $d'$ is chosen so that $d'\equiv d\pmod{p}$ and
$d'\equiv 1\pmod{4N}$.  The operators $\ip{d}_{4N}$ for $d\in
(\Z/4N\Z)^\times$ are defined similarly, and for any $d$ prime to
$4Np$ there is a factorization $\ip{d} = \ip{d}_p\circ \ip{d}_{4N}$.

Let $k$ be an odd positive integer and define $\lambda = (k-1)/2$.  In
Section 6 of \cite{hieigencurve} we defined an injection
$$\widetilde{M}_{k/2}^{\cl}(4Np,K)^{\tau^j} \longrightarrow
\widetilde{M}_{\lambda\tau^j}(4N,K,p^{-r})$$
for any rational number
$r$ with $0\leq r\leq r_1$, where $()^{\tau^j}$
indicates the eigenspace of the $j^{\small\rm th}$ power of the
Teichmuller character $\tau$ for the action of the $\ip{d}_p$.  This
injection is equivariant with respect to all of the Hecke operators
and tame diamonds operators $\ip{}_{4N}$.  It is also compatible with
varying $r$ and in particular furnishes an injection
$$\widetilde{M}_{k/2}^{\cl}(4Np,K)^{\tau^j}\longrightarrow
\widetilde{M}_{\lambda\tau^j}^\dagger (4N,K)$$
into the space of all
overconvergent forms.  If we further restrict to the eigenspace of
a Dirichlet character $\chi$ modulo $4N$ (valued in $K$) for the
$\ip{d}_{4N}$, then we also get an embedding
$$\widetilde{M}_{k/2}^{\cl}(4Np,K,\chi\tau^j)\longrightarrow
\widetilde{M}^\dagger_{\lambda\tau^j}(4N,K,\chi)$$
of the space of
classical forms with nebentypus character $\chi\tau^j$ for the entire
group of diamond operators $(\Z/4Np\Z)^\times$ into the space of
overconvergent forms of tame nebentypus $\chi$.  The image of any of
these injections will be referred to as the \emph{classical} subspace
of the target.  Note that this definition is consistent with the
definition of a classical form given in \cite{hieigencurve}.
\begin{rema}
  Everything in the previous paragraph goes through verbatim when
  restricted to the respective spaces of cusp forms.  However, we
  caution that it is possible for a noncuspidal classical form to be
  mapped to the space of $p$-adic cusp forms
  $\widetilde{S}^\dagger_{\lambda\tau^j}(4N,K)$ under the above
  inclusions; the form need only vanish at the cusps in the connected
  component $X_1(4Np)^\an_{\geq 1}$ of the ordinary locus in
  $X_1(4Np)^\an_K$.  Still, by the \emph{classical subspace} of
  $\widetilde{S}_{\lambda\tau^j}^\dagger(4N,K)$ we mean the image of
  the map $$\widetilde{S}_{k/2}^{\cl}(4Np,K)^{\tau^j}\longrightarrow
  \widetilde{S}^\dagger_{\lambda\tau^j}(4N,K).$$
\end{rema}

\begin{theo}\label{control}
  Let $F$ be an element of
  $\widetilde{M}^\dagger_{\lambda\tau^j}(4N,K)$ or
  $\widetilde{S}^\dagger_{\lambda\tau^j}(4N,K)$  and suppose that
  there exists a monic polynomial $P(T)\in K[T]$ all of whose roots
  have valuation less than $2\lambda-1$ such that $P(U_{p^2})F=0$.
  Then $F$ is classical.
\end{theo}
\begin{proof}
  The case of $F\in\widetilde{M}^\dagger_{\lambda\tau^j}(4N,K)$ was
  settled in \cite{hieigencurve} (Theorem 6.1).  The proof follows
  Kassaei's approach in \cite{kassaei} and builds the classical form
  by analytic continuation and and gluing.  In particular, one writes
  down an explicit sequence of forms on the ``other'' component of the
  ordinary locus of $X_1(4Np)^\an_K$ that converges to the analytic
  continuation of $F$.  It is easy to see that these forms vanish at
  all cusps as long as $F$ does, so the proof of carries over to the
  cuspidal case verbatim.
\end{proof}

For later ease of use, we translate Theorem \ref{classicallift} into
the world of $p$-adic coefficients.
\begin{theo}\label{padicclassicallift}
  Suppose that $k\geq 5$ and
  $F\in\widetilde{S}^{\cl}_{k/2}(4Np,K,\chi\tau^j)$ is a nonzero
  classical eigenform for all Hecke operators $T_{\ell^2}$ and
  $U_{\ell^2}$ with eigenvalues $\alpha_{\ell}\in K$.  Then there
  exists a unique nonzero normalized classical cuspidal modular form
  $f$ of weight $k-1$, level $2Np$, and nebentypus $\tau^{2j}\chi^2$
  defined over $K$ that is an eigenform for all Hecke operators
  $T_{\ell}$ and $U_{\ell}$ with eigenvalues $\alpha_{\ell}$.
\end{theo}
\begin{proof}
  Fix an embedding $i:K\hookrightarrow \C$.  By $p$-adic GAGA, any
  $F\in\widetilde{M}_{k/2}^{\cl}(4Np,K)$ is the analytification of an
  element of $H^0(X_1(4Np)_K,\OO(\Sigma_{4Np}))$.  Pulling back via the
  embedding $i$ and passing to the complex analytic space we arrive at
  an element $F_i\in H^0(X_1(4Np)^\an_\C,\OO(\Sigma_{4Np}))$ depending
  on $i$.  The condition on the divisor of $F_i$ exactly guarantees
  that the meromorphic modular form $F_i\theta^k$ of weight $k/2$ is
  in fact holomorphic.  Moreover the association $F\longmapsto
  F_i\theta^k$ is equivariant for the action of Hecke and diamond
  operators on both sides.  This can be seen as a formal consequence
  of the (entirely parallel) construction of these operators on both
  spaces.  Alternatively, in case of the Hecke operators, this can be
  deduced by examining their effect on $q$-expansions.  Replacing the
  divisor $\Sigma_{4Np}$ with $\Sigma_{4Np} - C_{4Np}$ we see that the
  association $F\longmapsto F_i\theta^k$ also preserves the condition
  of cuspidality.

Suppose that $F\in\widetilde{S}_{k/2}^{\cl}(4Np,K)^{\tau^j}$ satisfies 
\begin{eqnarray*}
  T_{\ell^2}F &=& \alpha_{\ell}F\ \ \mbox{for all}\ \ \ell\nmid 4Np\\
  U_{\ell^2}F &=& \alpha_{\ell}F\ \ \mbox{for all}\ \ \ell\mid 4Np\\
  \ip{d}_{4N}F &=& \chi(d)F\ \ \mbox{for all}\ \ d\in(\Z/4N\Z)^\times
\end{eqnarray*}
for some Dirichlet character $\chi$ mod $4N$, with
$\alpha_{\ell},\chi(d)\in K$ for all $\ell$ and $d$.  It follows that
the holomorphic cusp form $F_i\theta^k$ is of weight $k/2$, level
$4Np$, nebentypus character $i\circ(\tau^j\chi)$, and is an eigenform
for all $T_{\ell^2}$ and $U_{\ell^2}$ with eigenvalues
$i(\alpha_{\ell})$.  By the classical lifting theorem (Theorem
\ref{classicallift}), we can associate to this form a cuspidal modular
form $f_i$ of weight $k-1$, level $2Np$, and nebentypus character
$i\circ(\tau^{2j}\chi^2)$ that is an eigenfunction for all $T_{\ell}$
and $U_{\ell}$ with eigenvalues $i(\alpha_{\ell})$.  By
complex-analytic GAGA, this form is actually an algebraic modular form
defined over $\C$ with all the same properties.  The $q$-expansion
coefficients of $f_i$ at the cusp $(\Tate(q),e^{2\pi i/4Np})$ are the
leading coefficient $a_1(f_i)$ times polynomials in the Hecke
eigenvalues $i(\alpha_{\ell})$.  Since $f_i\neq 0$, $a_1(f_i)\neq 0$
as well and we may normalize $f_i$ so that $a_1(f_i)=1$.  It now
follows from the $q$-expansion principle that $f_i$ is in fact an
algebraic modular form defined over the field $K$ of weight $k-1$,
level $4Np$, and nebentypus $\tau^{2j}\chi^2$ that is an eigenform for
all the $T_{\ell}$ and $U_{\ell}$ with eigenvalues $\alpha_{\ell}$.
Moreover, $f_i$ is completely determined by the eigenvalues
$\alpha_\ell$ for all $\ell$ and is therefore unique (and in
particular independent of $i$).
\end{proof}

\section{The eigencurves}

As the details of the construction of the relevant eigencurves will be
used extensively in the sequel, we briefly recall them here.  The
construction uses various Banach modules of modular forms equipped
with a Hecke action.  We refer the reader to Sections 6 and 7 of
\cite{buzzardeigenvarieties} for the integral weight definitions and
to Sections 4 and 5 of \cite{hieigencurve} for the half-integral
weight definitions.  We also refer the reader to
\cite{buzzardeigenvarieties} for foundational details concerning the
Fredholm theory that goes into the construction of eigenvarieties in
general.

For the moment, let $\scr{W}$ be any reduced rigid space over a
complete and discretely-valued extension field $K$ of $\Q_p$.  Fix a
set $\mathbf{T}$ with a distinguished element $\phi\in\mathbf{T}$.
Suppose that we are given, for each admissible affinoid open
$X\subseteq \scr{W}$, an $\OO(X)$-Banach module $M_X$ satisfying
property ({\it Pr}) of \cite{buzzardeigenvarieties}, equipped with map
\begin{eqnarray*}
  \mathbf{T} & \longrightarrow & \End_{\OO(X)}(M_X) \\
  t & \longmapsto & t_X
\end{eqnarray*}
whose image consists of commuting continuous endomorphisms and such
that $\phi_X$ is compact for each $X$.  Suppose also that for each
pair $X_1\subseteq X_2\subseteq \scr{W}$ of admissible affinoid opens we
are given a continuous injective map
$$\alpha_{12}:M_{X_1}\longrightarrow
M_{X_2}\widehat{\otimes}_{\OO(X_2)} \OO(X_1)$$
of $\OO(X_1)$-modules that
is a ``link'' in the sense of \cite{buzzardeigenvarieties}.  Finally,
suppose that these links commute with $\mathbf{T}$ in the sense that
$\alpha_{12}\circ t_{X_1} =
(t_{X_2}\widehat{\otimes}1)\circ\alpha_{12}$ for each $t\in\mathbf{T}$
and that they satisfy the cocycle condition
$\alpha_{13}=\alpha_{23}\circ\alpha_{12}$ for any triple $X_1\subseteq
X_2\subseteq X_3\subseteq\scr{W}$ of admissible open affinoids.

Out of this data one can use the machinery of
\cite{buzzardeigenvarieties} to construct rigid analytic spaces spaces
$D$ and $Z$ over $K$ called the \emph{eigenvariety} and \emph{spectral
  variety}, respectively, equipped with canonical maps
$$D\longrightarrow Z\longrightarrow \scr{W}.$$
The points of $D$
correspond to systems of eigenvalues of $\mathbf{T}$ acting on the
modules $\{M_X\}$ such that the $\phi$-eigenvalue is nonzero, in a
sense made precise below in Lemma \ref{eigenpoints}, and the map
$D\longrightarrow Z$ simply records the reciprocal of the
$\phi$-eigenvalue and a point in $\scr{W}$.

The space $Z$ is easy to define.  For any admissible affinoid
$X\subseteq\scr{W}$ we define $Z_X$ to be the zero locus of the
Fredholm determinant
$$P_X(T)=\det(1-\phi_X T\ |\ M_X)$$
in $X\times\A^1$.  The links
guarantee that this determinant is independent of $X$ in the sense
that if $X_1\subseteq X_2\subseteq\scr{W}$ are two admissible open
affinoids, then $P_{X_1}(T)$ is the image of $P_{X_2}(T)$ under the
natural restriction map on the coefficients.  It follows that we can
glue the $Z_X$ for varying $X$ covering $\scr{W}$ to obtain a space
$Z$ equipped with a map $Z\longrightarrow
\scr{W}$.

The construction of $D$ is more complicated, and involves first
finding a nice admissible cover of $Z$ and constructing the part of
$D$ over each piece separately and then gluing these pieces
together.  This cover is furnished by the following theorem (Theorem
4.6 of \cite{buzzardeigenvarieties}).
\begin{theo}\label{cover}
  Let $R$ be a reduced affinoid algebra over $K$, let $P(T)$ be a
  Fredholm series over $R$, and let $Z\subset \Sp(R)\times \A^1$
  denote the hypersurface cut out by $P(T)$ equipped with the
  projection $\pi: Z\longrightarrow \Sp(R)$.  Define $\scr{C}(Z)$ to
  be the collection of admissible affinoid opens $Y$ in $Z$ such that
  \begin{itemize}
  \item $Y'=\pi(Y)$ is an admissible affinoid open in $\Sp(R)$,
  \item $\pi: Y\longrightarrow Y'$ is finite, and
  \item there exists $e\in \OO(\pi^{-1}(Y'))$ such that $e^2=e$ and $Y$
  is the zero locus of $e$.
  \end{itemize}
  Then $\scr{C}(Z)$ is an admissible cover of $Z$.
\end{theo}
\noindent We will generally take $Y'$ to be connected in what follows.
This is not a serious restriction, since $Y$ is the disjoint union of
the parts lying over the various connected components of $Y'$.  We
also remark that the third of the above conditions follows from the
first two (this is observed in \cite{buzzardeigenvarieties} where
references to the proof are supplied).

Fix an admissible open affinoid $X\subseteq \scr{W}$ and fix
$Y\in\scr{C}(Z_X)$ with connected image $Y'\subseteq X$.  Let
\begin{equation}\label{defofpy}
  P_{Y'}(T) = \det(1-(\phi_X\widehat{\otimes}1)T\ |\
  M_X\widehat{\otimes}_{\OO(X)}\OO(Y'))
\end{equation}
Note that this is not in conflict with the existing notation $P_{Y'}$
(for an arbitrary connected admissible affinoid open
$Y'\subseteq\scr{W}$) by Lemma 2.13 of \cite{buzzardeigenvarieties}
and the above comments about the independence of $P_{X}$ on $X$.  When
an ambient $X\subseteq\scr{W}$ is fixed, we prefer to use the
definition (\ref{defofpy}) instead so as to avoid using the links.

As explained in Section 5 of \cite{buzzardeigenvarieties}, we can
associate to the choice of $Y$ a factorization $P_{Y'}(T)= Q(T)Q'(T)$
into relatively prime factors with constant term $1$, where $Q$ is a
polynomial of degree equal to the degree of the projection
$\pi:Y\longrightarrow Y'$ whose leading coefficient is a unit.
Geometrically speaking, $Y$ is the zero locus of the polynomial $Q$ in
$\pi^{-1}(Y')$ while its complement $\pi^{-1}(Y')\setminus Y$ is cut
out by the Fredholm series $Q'$.  By the Fredholm theory of
\cite{buzzardeigenvarieties} there is a unique decomposition
$$M_{X}\widehat{\otimes}_{\OO(X)}\OO(Y')\cong N\oplus F$$
into closed
$\phi$-invariant submodules with the property that $Q^*(\phi)$
vanishes on $N$ and is invertible on $F$.  Moreover, $N$ is projective
of rank equal to the degree of $Q$ and the characteristic power series
of $\phi$ on $N$ is $Q$.  The projectors onto to the submodules $N$
and $F$ are in the closure of $\OO(Y')[\phi]$, so by the commutativity
assumption these submodules are preserved under all of the
endomorphisms associated to elements of $\mathbf{T}$.  Let
$\mathbf{T}(Y)$ denote the $\OO(Y')$-subalgebra of $\End_{\OO(Y')}(N)$
generated by the endomorphisms $t_{X}\widehat{\otimes}1$ for
$t\in\mathbf{T}$.  This algebra is finite over $\OO(Y')$ and therefore
affinoid.

Since the polynomial $Q$ is the characteristic power series of $\phi$
on $N$ and has a unit for a leading coefficient, $\phi$ is invertible
on $N$.  Moreover, since $Q^*(\phi)=0$ on $N$, $Q(\phi^{-1})=0$ on $N$
as well.  Thus we have well-defined map
\begin{eqnarray*}
  \OO(Y)\cong \OO(Y')[T]/(Q(T)) & \longrightarrow & \mathbf{T}(Y) \\
  T & \longmapsto & \phi^{-1}
\end{eqnarray*}
which is to say that the affinoid $D_Y = \Sp(\mathbf{T}(Y))$ is
equipped with a natural finite map $D_Y\longrightarrow Y$.  These
affinoids and maps can be glued together for varying
$Y\in\scr{C}(Z_X)$ to obtain a space $D_X$ equipped with a map
$D_X\longrightarrow Z_X$.  Finally, using the links $\alpha_{ij}$ we
can glue over varying $X\subseteq \scr{W}$ to obtain a space $D$ and
canonical maps
$$D\longrightarrow Z\longrightarrow \scr{W}.$$

These spaces and maps have a particularly nice interpretation on the
level of points.  Let $L$ be a complete and discretely-valued
extension of $K$.
\begin{defi}
  A pair $(\kappa,\gamma)$ consisting of an $L$-valued point
  $\kappa\in \scr{W}(L)$ and a map of sets $\gamma:
  \mathbf{T}\longrightarrow L$ is an \emph{$L$-valued system of
    eigenvalues} of $\mathbf{T}$ acting on the $\{M_X\}$ if there
  exists an admissible open affinoid $X\subseteq \scr{W}$ containing
  $\kappa$ and a nonzero element $$m\in
  M_X\widehat{\otimes}_{\OO(X),\kappa} L$$
  such that
  $$(t\widehat{\otimes}1)m = \gamma(t)m$$
  for all $t\in\mathbf{T}$.
  This system of eigenvalues is called \emph{$\phi$-finite} if
  $\gamma(\phi)\neq 0$.
\end{defi}
Let $x$ be an $L$-valued point of $D$.  Then $x$ lives over a point
$\kappa_x$ in some admissible affinoid open $X\subseteq \scr{W}$
and moreover lies in $D_Y$ for some $Y\in\scr{C}(Z_X)$.
The associated $K$-algebra map 
$\mathbf{T}(Y)\longrightarrow L$
gives a map
$\gamma_x:\mathbf{T}\longrightarrow L$ of sets.
\begin{lemm}\label{eigenpoints}
  The association $x\longmapsto (\kappa_x,\gamma_x)$ is a well-defined
  bijection between the set of $L$-valued points of $D$ and the set of
  $L$-valued $\phi$-finite systems of eigenvalues of $\mathbf{T}$
  acting on the $\{M_X\}$.  The map $D\longrightarrow Z$ is given by 
  $$x\longmapsto (\kappa_x,\gamma_x(\phi)^{-1})$$
  on $L$-valued
  points.
\end{lemm}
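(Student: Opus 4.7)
The plan is to unwind the construction of $D$ step by step. For well-definedness and the last sentence: any $x \in D(L)$ lies in some $D_Y$ for $Y \in \scr{C}(Z_X)$ with $\kappa_x \in X$, hence gives an $L$-algebra map $\mathbf{T}(Y) \to L$ that yields $\gamma_x$ by composition with $\mathbf{T} \to \mathbf{T}(Y)$. Independence of the choices of $X$ and $Y$ is built into the gluing, since the links $\alpha_{ij}$ commute with $\mathbf{T}$ and the various $D_Y$ were glued along $\mathbf{T}$-equivariant maps. Under the structure map $\OO(Y) = \OO(Y')[T]/(Q(T)) \to \mathbf{T}(Y)$ the variable $T$ is sent to $\phi^{-1}$, so $\gamma_x(\phi)^{-1}$ equals the $T$-coordinate of the image of $x$ in $Y \subseteq X \times \A^1$. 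Because $Q(0) = 1$, the coordinate $T$ is a unit on all of $Y$; this gives $\gamma_x(\phi) \neq 0$ and simultaneously identifies the image of $x$ in $Z$ as $(\kappa_x, \gamma_x(\phi)^{-1})$.

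For injectivity, two points $x, x' \in D(L)$ with the same pair $(\kappa, \gamma)$ share the image $(\kappa, \gamma(\phi)^{-1})$ in $Z$, so both lie in a common $D_Y$. Since $\mathbf{T}(Y)$ is generated as an $\OO(Y')$-algebra by the image of $\mathbf{T}$ and the $\OO(Y')$-action of both points factors through $\kappa$, the two corresponding $L$-algebra maps $\mathbf{T}(Y) \to L$ coincide, forcing $x = x'$.

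Surjectivity is the main content. Given a $\phi$-finite system $(\kappa, \gamma)$ with eigenvector $0 \neq m \in M_X \widehat{\otimes}_{\OO(X), \kappa} L$, the relation $\phi m = \gamma(\phi) m$ and $\gamma(\phi) \neq 0$ show that $\gamma(\phi)^{-1}$ is a root of $P_X(T)$ specialized at $\kappa$, so $(\kappa, \gamma(\phi)^{-1}) \in Z_X(L)$. Choose $Y \in \scr{C}(Z_X)$ containing this point with connected image $Y'$, producing the Fredholm decomposition $M_X \widehat{\otimes}_{\OO(X)} \OO(Y') = N \oplus F$ and factorization $P_{Y'}(T) = Q(T) Q'(T)$. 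The key technical step is to place $m$ in $N \otimes_{\OO(Y'), \kappa} L$: writing $m = m_N + m_F$ with respect to the decomposition and using that $Q^*(\phi) m = Q^*(\gamma(\phi)) m$, one checks that $Q^*(\gamma(\phi)) = \gamma(\phi)^{\deg Q} Q(\gamma(\phi)^{-1}) = 0$ because $(\kappa, \gamma(\phi)^{-1}) \in Y$ is a zero of $Q$. Since $Q^*(\phi)$ is invertible on $F$ and hence on $F \otimes_{\OO(Y'), \kappa} L$, this forces $m_F = 0$, so $m \in N \otimes_{\OO(Y'), \kappa} L$. The resulting $\mathbf{T}(Y)$-action on the eigenline $L m$ yields an $L$-algebra map $\mathbf{T}(Y) \to L$, i.e., a point of $D_Y(L)$ whose associated system of eigenvalues is $(\kappa, \gamma)$.

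The main obstacle throughout is this last transfer of $m$ into the finite-rank piece $N$: the hypothesis only gives an eigenvector in the large Banach module $M_X \widehat{\otimes}_{\OO(X), \kappa} L$, while the point of $D$ one seeks is extracted from $N$ and the affinoid algebra $\mathbf{T}(Y) \subseteq \End_{\OO(Y')}(N)$. The bridge is precisely the computation $Q^*(\phi) m = Q^*(\gamma(\phi)) m = 0$ combined with the invertibility of $Q^*(\phi)$ on $F$; once $m$ is known to sit in $N$, the remaining steps are routine bookkeeping about the finite commutative $\OO(Y')$-algebra $\mathbf{T}(Y)$.
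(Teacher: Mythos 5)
Your argument is a genuine from-scratch proof; the paper itself disposes of the first assertion by citing \cite{buzzardeigenvarieties} and calls the second obvious, so there is no internal argument to compare against step by step. Your treatment of injectivity, of surjectivity (in particular the key computation $Q^*_\kappa(\phi)m=Q^*_\kappa(\gamma(\phi))m=0$ together with invertibility of $Q^*(\phi)$ on $F$, forcing $m_F=0$), and of the identification of $D\longrightarrow Z$ with $x\longmapsto(\kappa_x,\gamma_x(\phi)^{-1})$ via $T\mapsto\phi^{-1}$ and $Q(0)=1$ are all correct.

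There is, however, one gap: you never verify that $(\kappa_x,\gamma_x)$ actually lies in the stated target set. By definition a ($\phi$-finite) system of eigenvalues requires a \emph{nonzero simultaneous eigenvector} $m\in M_X\widehat{\otimes}_{\OO(X),\kappa_x}L$, whereas an $L$-point of $D_Y=\Sp(\mathbf{T}(Y))$ only hands you an $L$-algebra homomorphism $\mathbf{T}(Y)\longrightarrow L$; producing the eigenvector is not automatic, and without it the map $x\longmapsto(\kappa_x,\gamma_x)$ is not known to land in the set of systems of eigenvalues, so the bijection claim is not yet established. The standard fix is the mirror image of your surjectivity step: $\mathbf{T}(Y)$ is by construction a subalgebra of $\End_{\OO(Y')}(N)$, so $N$ is a faithful finite $\mathbf{T}(Y)$-module and its support is all of $\Sp(\mathbf{T}(Y))$. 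Base-changing along $\kappa_x:\OO(Y')\longrightarrow L$, the support of the finite-dimensional space $N_{\kappa_x}$ over the Artinian algebra $\mathbf{T}(Y)\otimes_{\OO(Y'),\kappa_x}L$ is again everything, so the localization at the maximal ideal $\mathfrak{m}_x$ corresponding to $x$ is a nonzero generalized eigenspace; any nonzero element of its socle is killed by $\mathfrak{m}_x$, i.e.\ is a simultaneous eigenvector on which $\mathbf{T}$ acts through $\gamma_x$. With this inserted, your proof is complete.
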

\begin{proof}
  The first assertion is proven in \cite{buzzardeigenvarieties}.  The
  second is obvious from the the definition of the map
  $D\longrightarrow Z$.
\end{proof}

For the remainder of the paper, $p$ will denote an odd prime and
$\scr{W}$ will denote $p$-adic weight space over $\Q_p$.  Fix a
positive integer $N$ prime to $p$.  For each admissible affinoid open
$X\subseteq \scr{W}$ and each rational number $r\in [0,r_n]$, we
define $M_X(N,\Q_p,p^{-r})$ to be the $\OO(X)$-Banach module of
families of (integral weight) modular forms of tame level $N$ and
growth condition $p^{-r}$ on $X$.  This module has been defined, for
example, in \cite{colmaz} and \cite{buzzardeigenvarieties} where an
action of the Hecke operators $T_{\ell}$ ($\ell \nmid Np$) and
$U_{\ell}$ ($\ell\mid Np$) and the tame diamond operators $\ip{d}_{N}$
($d\in(\Z/N\Z)^\times$) is also defined.  These operators are
continuous and the operator $U_p$ is compact whenever $r>0$.
Similarly, we define $\widetilde{M}_X(4N,\Q_p,p^{-r})$ to be the
$\OO(X)$-Banach module of families of half-integral weight modular
forms of tame level $4N$ and growth condition $p^{-r}$ on $X$.  This
module was introduced in \cite{hieigencurve} where an action of the
Hecke operators $T_{\ell^2}$ ($\ell\nmid 4Np$) and $U_{\ell^2}$
($\ell\mid 4Np$) and the tame diamond operators $\ip{d}_{4N}$
($d\in(\Z/4N\Z)^\times$) is also defined.  These operators are
continuous and $U_{p^2}$ is compact whenever $r>0$.  Each of these
modules has a cuspidal submodule having all of the same operators and
properties and will be denoted by replacing the letter $M$ by the
letter $S$.  The tilde will be used throughout the paper to
distinguish half-integral weight objects from their integral weight
counterparts.

%For an admissible open affinoid $X\subseteq \scr{W}$ we define
%\begin{eqnarray*}
%  S_X &=& S_X(2N,\Q_p,p^{-r_n}) \\
%  \widetilde{S}_X &=& \widetilde{S}_X(4N,\Q_p,p^{-r_n})
%\end{eqnarray*}
%where $n$ is chosen to be the smallest integer such that $X$ is
%contained in the affinoid $\scr{W}_n$.  In

All of these modules of forms satisfy (\emph{Pr}) of
\cite{buzzardeigenvarieties}.  The system of Banach modules $\{S_X\}$,
where $S_X = S_X(N,\Q_p,p^{-r_n})$ and $n$ is chosen to be the
smallest integer such that $X\subseteq \scr{W}_n$, carries canonical
links defined in \cite{buzzardeigenvarieties} (strictly speaking on
the entire space of forms, but they can be defined in exactly the same
manner on the cuspidal submodule).  In \cite{hieigencurve} we
construct canonical links for the system $\{\widetilde{S}_X\}$ where
$\widetilde{S}_X =\widetilde{S}_X(4N,\Q_p,p^{-r_n})$.  The Hecke
operators and tame diamond operators in each case furnish commuting
endomorphisms that are compatible with these links, and $U_p$ and
$U_{p^2}$ are compact, so we may apply the above construction in both
the integral and half-integral weight cases.  Note that the
eigenvarieties and spectral varieties so obtained are equidimensional
of dimension $1$ by Lemma 5.8 of \cite{buzzardeigenvarieties}. We
denote the eigencurve and spectral curve associated to
$\{S_X(N,\Q_p,p^{-r_n})\}$ by $D$ and $Z$, respectively, and refer to
them as the \emph{cuspidal integral weight eigencurve} and
\emph{spectral curve} of tame level $N$, respectively.  Similarly we
denote the eigencurve and spectral curve associated to
$\{\widetilde{S}_X(4N,\Q_p,p^{-r_n})\}$ by $\widetilde{D}$ and
$\widetilde{Z}$, respectively, and refer to them as the \emph{cuspidal
  half-integral weight eigencurve} and \emph{spectral curve} of tame
level $4N$, respectively.
\begin{rema}
  In the interest of notational brevity we have chosen not to adorn
  these spacing so as to indicate that we are working only with the
  cuspidal part.  This puts the notation in conflict with, for
  example, the author's previous notation in \cite{hieigencurve}.
  Hopefully no confusion will arise from this conflict.
\end{rema}

\section{Some density results}\label{densityresults}

We will need some lemmas on the density of certain sets of classical
points in what follows.  
%Recall that a weight $\kappa\in\scr{W}(\C_p)$
%is called classical if it is of the form $$\kappa(x) = \chi(x)\cdot
%x^\lambda$$
%for some positive integer $\lambda$ and some finite-order
%character $\chi$ of $\Z_p^\times$.  
Following Chenevier (\cite{chenevier}), we call a subset
$\Sigma\subseteq \scr{W}(\C_p)$ \emph{very Zariski-dense} if for each
$\kappa\in \Sigma$ and each irreducible (equivalently, connected)
admissible affinoid open $V\subseteq\scr{W}$ containing $\kappa$,
$V(\C_p)\cap\Sigma$ is Zariski-dense in $V$.
\begin{lemm}\label{sigmaisvzd}
  The set $\Sigma$ of weights of the form $\lambda\tau^j$ for
  $\lambda\geq 0$ and $0\leq j< p-1$ is very Zariski-dense in
  $\scr{W}$.
\end{lemm}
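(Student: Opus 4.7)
The plan is to reduce to a single component of $\scr{W}$, identify $\Sigma$ explicitly with a dense subset of the open unit ball, and exhibit $p$-adically convergent sequences in $\Sigma$.

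First, because $\scr{W} = \bigsqcup_{i=0}^{p-2}\scr{W}^i$ is an admissible disjoint union, any connected admissible affinoid open $V \subseteq \scr{W}$ is contained in a unique $\scr{W}^i$, so I may fix $i$ and work inside $\scr{W}^i$ throughout.

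Next I would transfer to $B(1,1)$ via the isomorphism $\scr{W}^i \cong B(1,1)$, $\kappa \mapsto \kappa(1+p)$. Writing the integer-weight character as $\lambda = \tau^\lambda \cdot \langle \cdot \rangle^\lambda$ in the canonical decomposition $\kappa = \tau^i\cdot \kappa'$, the weight $\lambda\tau^j$ lies in $\scr{W}^i$ precisely when $\lambda + j \equiv i \pmod{p-1}$, and its image in $B(1,1)$ is $(1+p)^\lambda \cdot \tau^j(1+p) = (1+p)^\lambda$, since $\tau(1+p)=1$. Hence $\Sigma \cap \scr{W}^i$ corresponds to $\{(1+p)^\lambda : \lambda \in \Z_{\geq 0}\} \subseteq B(1,1)$.

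Third, from any $\kappa = \lambda_0\tau^{j_0} \in V\cap \Sigma$ I would form the accumulation sequence $\lambda_n = \lambda_0 + (p-1)p^n$. Each $\lambda_n\tau^{j_0}$ still lies in $\Sigma\cap \scr{W}^i$ (the residue $\lambda_n + j_0 \bmod (p-1)$ is preserved), and $(1+p)^{(p-1)p^n} \to 1$ in $\Z_p$ gives $(1+p)^{\lambda_n} \to (1+p)^{\lambda_0}$. Since $V$ is an admissible affinoid open, it is open in the analytic topology, so it contains all but finitely many of the $(1+p)^{\lambda_n}$; in particular $V\cap \Sigma$ is infinite.

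Finally, I would upgrade infinitude to Zariski-density using that $V$, being a reduced affinoid of pure dimension $1$ inherited from $\scr{W}$, has only finite proper Zariski-closed subsets; hence any infinite subset of $V(\C_p)$ is Zariski-dense. I do not foresee a serious obstacle; the only careful step is the Teichmuller book-keeping in the second paragraph — confirming that $\lambda\tau^j$ lands in $\scr{W}^i$ exactly when $\lambda+j\equiv i \pmod{p-1}$ and that its coordinate in $B(1,1)$ depends only on $\lambda$ — after which convergence along $(p-1)p^n$ delivers the result.
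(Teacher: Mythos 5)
Your proof is correct and follows essentially the same route as the paper: reduce to a single component $\scr{W}^i\cong B(1,1)$, observe that $\lambda\tau^j$ has coordinate $(1+p)^\lambda$, and accumulate at $\lambda_0\tau^{j_0}$ along $\lambda\equiv\lambda_0\pmod{(p-1)p^N}$. The only (harmless) difference is the last step: the paper first establishes density in a small closed ball and then invokes Conrad's Lemma 2.2.3 to pass to all of $V$, whereas you directly use that a proper analytic subset of an irreducible one-dimensional affinoid is finite — both are valid.
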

\begin{proof}
  Fix $\lambda_0\tau^{j_0}\in \Sigma$ and suppose moreover that
  $\lambda_0\tau^{j_0}\in \scr{W}^i$.  Let $V$ be any connected
  admissible affinoid containing $\lambda_0$, so $V\subseteq
  \scr{W}^i$.  Recall that $\scr{W}^i$ is isomorphic to the open unit
  ball $B(1,1)$ about $1$, the isomorphism being
  $$\kappa\longmapsto \kappa(1+p).$$
  Note that the points in $\Sigma$
  are all $\Q_p$-valued.  Since $V(\Q_p)$ is open in the $p$-adic
  topology, there exists an $\epsilon > 0$ such that the closed ball
  $B(\lambda_0\tau^{j_0},\epsilon]$ of radius $\epsilon$ about
  $\lambda_0\tau^{j_0}$ is contained in $V$.  Admit for the moment
  that $B(\lambda_0\tau^{j_0},\epsilon](\Q_p)\cap \Sigma $ is Zariski
  dense in $B(\lambda_0\tau^{j_0},\epsilon]$.  We claim that this
  implies that $V(\Q_p)\cap \Sigma$ is Zariski-dense in $V$.  Let
  $T\subset V$ be an analytic subset containing $V(\Q_p)\cap \Sigma$.
  Then $T\cap B(\lambda_0\tau^{j_0},\epsilon]$ is an analytic subset
  of $B(\lambda_0\tau^{j_0},\epsilon]$ containing
  $B(\lambda_0\tau^{j_0},\epsilon](\Q_p)\cap \Sigma$ and by assumption
  we conclude that $T$ contains $B(\lambda_0\tau^{j_0},\epsilon]$.  It
  now follows from Lemma 2.2.3 of \cite{conradirredcpnts} that $T=V$.
  
  So we are reduced proving Zariski-density in the special case of
  $B(\lambda_0\tau^{j_0},\epsilon]$, which amounts to proving that
  this ball contains infinitely many elements of $\Sigma$.  But
  $$|\lambda\tau^j - \lambda_0\tau^{j_0}| = |(1+p)^\lambda -
  (1+p)^{\lambda_0}| = |(1+p)^{\lambda-\lambda_0}-1|$$
  and we can find
  infinitely many $\lambda\tau^j\in B(\lambda_0\tau^{j_0},\epsilon]$
  which make this as small as we like by choosing $j=j_0$ and
  $\lambda\equiv \lambda_0\pmod{ (p-1)p^N}$ for sufficiently large
  $N$.
\end{proof}

%Let $\Sigma_{\widetilde{Z}}$ denote the set of points $(\lambda,\alpha)\in
%\widetilde{Z}(\C_p)$ with $\lambda\in \Sigma$ and $v(\alpha^{-1})<
%2\lambda -1$. 
Let $X\subseteq \scr{W}$ be an admissible affinoid open.  For any
polynomial $\widetilde{h}$ over $\OO(X)$ in the Hecke operators
$T_{\ell^2}$, $U_{\ell^2}$ and the diamond operators $\ip{d}_{4N}$, we
denote by $\widetilde{Z}^{\widetilde{h}}_X$ the subspace of $X\times
\A^1$ defined as the zero locus of
$$\widetilde{P}^{\widetilde{h}}_X(T) = \det(1-\widetilde{h}U_{p^2}T\mid
\widetilde{S}_X)$$ in $X\times\A^1$.
\begin{defi}   Let $K/\Q_p$ be a finite extension.
\begin{itemize}
\item We call a $K$-valued point
  $(\kappa,\alpha)\in\widetilde{Z}_X^{\widetilde{h}}$ \emph{classical}
  if $\kappa\in\Sigma$ and there exists a nonzero classical form $F\in
  \widetilde{S}^{\dagger}_\kappa(4N,K)$ such that $\widetilde{h}U_{p^2}F
  = \alpha^{-1}F$.
\item Let $x\in \widetilde{D}(K)$ and let $(\kappa,\gamma)$ be the
  $K$-valued system of eigenvalues corresponding to $x$ by Lemma
  \ref{eigenpoints}.  Then $x$ is called \emph{classical} if
  $\kappa\in\Sigma$ and there exists a nonzero classical form $F\in
  \widetilde{S}^{\dagger}_{\kappa}(4N,K)$ on which the the operators
  $T_{\ell^2}$, $U_{\ell^2}$, and $\ip{d}_{4N}$ act through $\gamma$.
\end{itemize}
\end{defi}

\begin{lemm}\label{densitytwo}
  Let $X\subseteq \scr{W}$ be a connected admissible open affinoid
  containing an element of $\Sigma$.  Then the classical points are
  Zariski-dense in $\widetilde{Z}^{\widetilde{h}}_X$.
\end{lemm}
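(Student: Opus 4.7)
The plan is to use the admissible cover $\scr{C}(\widetilde{Z}^{\widetilde{h}}_X)$ from Theorem \ref{cover}, localizing to each member $Y\in\scr{C}(\widetilde{Z}^{\widetilde{h}}_X)$ with $Y'=\pi(Y)$ connected. On such a $Y$, let $Q(T)\in\OO(Y')[T]$ be the polynomial factor of the Fredholm determinant over $Y'$ whose zero locus is $Y$. Since $Q$ has unit leading coefficient, the coordinate $\alpha$ satisfies $v_p(\alpha)\geq -M$ on $Y$ for some $M$, so the $\widetilde{h}U_{p^2}$-eigenvalue $\alpha^{-1}$ has slope $\leq M$. Continuity of $\widetilde{h}$ on the Banach module $\widetilde{S}_X$ provides an operator-norm bound, hence a constant $c$ with $v_p(h)\geq -c$ for the $\widetilde{h}$-eigenvalue $h$ on any Hecke eigenvector in $\widetilde{S}^\dagger_\kappa(4N,K)$ as $\kappa$ varies over $Y'$.

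Let $V_\kappa\subseteq\widetilde{S}^\dagger_\kappa(4N,K)$ be the finite-dimensional subspace on which $\widetilde{h}U_{p^2}$ has characteristic polynomial $Q_\kappa(T)$; the reciprocals of its eigenvalues enumerate $\pi^{-1}(\kappa)\subseteq Y$. Since $U_{p^2}$ commutes with $\widetilde{h}$ and with $\widetilde{h}U_{p^2}$, it preserves $V_\kappa$, and simultaneous trigonalization of $\widetilde{h}$ and $U_{p^2}$ on $V_\kappa$ gives $v_p(\beta) = v_p(h\beta) - v_p(h) \leq M+c$ for every $U_{p^2}$-eigenvalue $\beta$ on $V_\kappa$. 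Pick $L > (M+c+1)/2$. For any $\kappa=\lambda\tau^j\in Y'\cap\Sigma$ with $\lambda > L$, the Cayley--Hamilton polynomial of $U_{p^2}$ on $V_\kappa$ annihilates $V_\kappa$ and has all roots of slope $< 2\lambda-1$, so Theorem \ref{control} implies $V_\kappa\subseteq\widetilde{S}^{\cl}_\kappa$; in particular, every point of $\pi^{-1}(\kappa)\subseteq Y$ is classical.

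For the density conclusion, when $Y'\cap\Sigma\neq\emptyset$ the argument of Lemma \ref{sigmaisvzd} (modified by removing finitely many $\lambda$, which is harmless) shows $\Sigma_L = \{\lambda\tau^j\in Y'\cap\Sigma : \lambda > L\}$ is Zariski-dense in $Y'$. Any closed analytic $T\subseteq Y$ containing all classical points then contains $\pi^{-1}(\Sigma_L)$. Using that $\widetilde{Z}^{\widetilde{h}}_X$ is equidimensional of dimension $1$ (Lemma 5.8 of \cite{buzzardeigenvarieties}) and $\pi:Y\to Y'$ is finite surjective, each irreducible component $Y_i$ of $Y$ projects finitely and surjectively onto a component of $Y'$, so $\pi^{-1}(\Sigma_L)\cap Y_i$ is infinite; as $Y_i$ is a $1$-dimensional irreducible affinoid whose only proper closed analytic subsets are finite, we conclude $T\cap Y_i = Y_i$ for every $i$, hence $T = Y$. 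To globalize, I would fix $\kappa_0\in X\cap\Sigma$ (supplied by the hypothesis), work one irreducible component $C$ of $\widetilde{Z}^{\widetilde{h}}_X$ at a time (each surjecting finitely onto $X$), choose cover members $Y_j$ containing $\pi|_C^{-1}(\kappa_0)$ so that $\kappa_0\in Y'_j$, apply the preceding to each $Y_j$, and upgrade local density to density on $C$ via the fact that each $C\cap Y_j$ is a nonempty Zariski-open of the irreducible space $C$.

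The main obstacle I anticipate is the slope-conversion in the second paragraph: turning the bound $M$ on the $\widetilde{h}U_{p^2}$-slope (intrinsic to the finite morphism $Y\to Y'$ and recorded by the spectral variety) into a uniform bound on the $U_{p^2}$-slope alone. This requires passing from the scalar eigenvalue $\alpha^{-1}$ to the pair $(h,\beta)$ of individual Hecke eigenvalues via simultaneous trigonalization on the finite-dimensional $V_\kappa$, and extracting a uniform lower bound on $v_p(h)$ from the operator norm of $\widetilde{h}$ on the \emph{family} $\widetilde{S}_X$. Each ingredient is standard, but the combination is where the care is needed.
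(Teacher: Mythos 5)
Your core argument is essentially the paper's: localize to a member $Y$ of the canonical cover with connected image $Y'$, extract a uniform bound $M'$ on the $U_{p^2}$-slopes of the finite projective summand $N$ over $Y'$, invoke Theorem \ref{control} at the weights $\lambda\tau^j\in\Sigma\cap Y'$ with $2\lambda-1>M'$, and propagate density using very-Zariski-density of $\Sigma$ and irreducibility. Your route to the slope bound is a legitimate variant: the paper gets it directly from the invertibility of $U_{p^2}$ on $N$ (so $\det(U_{p^2}\mid N)$ is a unit) and the Newton polygon of $\det(T-U_{p^2}\mid N)$ over $Y'$, whereas you factor through the $\widetilde{h}U_{p^2}$-eigenvalue (bounded because the coordinate is bounded on the affinoid $Y$) together with an operator-norm lower bound on the $\widetilde{h}$-eigenvalues and simultaneous trigonalization. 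Both work; the paper's is slightly cleaner since it never needs to control $\widetilde{h}$ separately.

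There is, however, a genuine gap in your globalization. You assert that each irreducible component $C$ of $\widetilde{Z}^{\widetilde{h}}_X$ surjects finitely onto $X$ and that you may choose cover members containing $\pi|_C^{-1}(\kappa_0)$ for your fixed $\kappa_0\in X\cap\Sigma$. Neither claim is justified: the spectral variety has infinite degree over weight space and is only finite over $X$ locally on the canonical cover, and the image of $C$ in $X$ need not contain the particular point $\kappa_0$ (so $\pi|_C^{-1}(\kappa_0)$ may be empty). What you actually need, and what the paper supplies as its first step, is Theorem 4.2.2 of \cite{conradirredcpnts}: each irreducible component $C$ is itself a Fredholm hypersurface and hence has Zariski-open (so, $X$ being an irreducible curve, cofinite) image in $X$. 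Combined with the Zariski-density of $\Sigma\cap X$ from Lemma \ref{sigmaisvzd}, this produces \emph{some} $\kappa\in\Sigma$ in the image of $C$ and a point $x\in C$ above it; one then takes $Y\ni x$, so that $Y'\cap\Sigma\neq\emptyset$ and $Y\cap C$ is a nonempty union of components of $Y$ (Corollary 2.2.9 of \cite{conradirredcpnts}), after which your density transfer to $C$ goes through. Without this input you have not shown that every component of $\widetilde{Z}^{\widetilde{h}}_X$ meets the classical locus at all, which is exactly what Zariski-density requires.
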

\begin{proof}
  Let $C$ be an irreducible component of
  $\widetilde{Z}_X^{\widetilde{h}}$.  By Theorem 4.2.2 of
  \cite{conradirredcpnts}, $C$ is a Fredholm hypersurface and hence
  has Zariski-open in image in $X$.  It follows that $C$ contains a
  point $x$ mapping to an element of $\Sigma$ by Lemma
  \ref{sigmaisvzd}.  Let $Y$ be an element of the canonical cover
  $\scr{C}(\widetilde{Z}^{\widetilde{h}}_X)$ containing $x$ with
  connected image $Y'\subseteq X$.  By Lemma \ref{sigmaisvzd},
  $\Sigma\cap Y'$ is Zariski-dense in $Y'$.  Moreover, the proof of
  the lemma shows that this is also true after omitting any finite
  collection of points in $\Sigma$.
  
  Let $N$ denote the direct summand of
  $\widetilde{S}_{X}\widehat{\otimes}_{\OO(X)}\OO(Y')$ corresponding to
  the choice of $Y$.  The $\OO(Y')$-module $N$ is projective of rank
  equal to the degree of $Y\longrightarrow Y'$ and is stable under the
  endomorphism $U_{p^2}$.  Moreover $U_{p^2}$ acts invertibly on $N$
  since $\widetilde{h}U_{p^2}$ does.  It follows that the eigenvalues
  of $U_{p^2}$ are bounded away from $0$ on $N$.  That is, there
  exists a positive integer $M$ such that for all finite extensions
  $K/\Q_p$ and all $K$-valued points $\OO(Y')\longrightarrow K$, the
  roots of the characteristic polynomial of $U_{p^2}$ acting on the
  fiber of $N\otimes_{\OO(Y')}K$ have absolute value at least $p^{-M}$.
  This can be seen, for example, by examining the variation of the
  Newton polygon of the characteristic polynomial
  \begin{equation}\label{charpolyonn}
    R(T)=\det(T-U_{p^2}\ |\ N)
  \end{equation}
  over $Y'$.  Let $\Sigma'$ denote the complement of the finite
  collection of $\lambda\tau^j\in \Sigma$ with $2\lambda -1 \leq M$,
  and note that $\Sigma'\cap Y'$ is still Zariski dense in $Y'$ by the
  above comments.  Since $Y\longrightarrow Y'$ is finite and flat,
  each irreducible component of $Y$ surjects onto $Y'$, and it follows
  that the preimage of $\Sigma'$ in $Y$ is Zariski-dense in every
  component of $Y$.  If $(\lambda\tau^j ,\alpha)$ is a $K$-valued
  point in this preimage, then there is a nonzero overconvergent form
  $$F\in N\otimes_{\OO(Y')}K\subseteq
  \widetilde{S}_{\lambda\tau^j}^\dagger(4N,K)$$
  with
  $\widetilde{h}U_{p^2}F=\alpha^{-1}F$.  Since $F$ is annihilated by
  $R_{\lambda\tau^j}(U_{p^2})$, the characteristic polynomial
  (\ref{charpolyonn}) with coefficients evaluated at $\lambda\tau^j$,
  it follows from Theorem \ref{control} that $F$ is a classical cusp
  form.  Thus the classical locus is also dense in each component of
  $Y$.  By Corollary 2.2.9 of \cite{conradirredcpnts}, $Y\cap C$ is a
  nonempty (since it contains $x$) union of irreducible components of
  $Y$.  It follows easily from Lemma 2.2.3 of \cite{conradirredcpnts}
  that a Zariski-dense subset of an admissible open in an irreducible
  space is in fact Zariski dense in the whole space, so we conclude
  that the classical points are Zariski-dense in $C$ for each $C$, and
  the lemma follows.
\end{proof}

\begin{rema}\label{densitythree}
  In case $\widetilde{h}=1$, this proof also shows that the set of
  points in $\widetilde{Z}_X$ of the form $(\lambda\tau^j, \alpha)$
  with $v(\alpha^{-1})< 2\lambda -1$ is Zariski-dense in
  $\widetilde{Z}_X$.  Since $\scr{W}$ is admissibly covered by a
  collection of connected admissible affinoid subdomains $\{X\}$ each
  of which meets $\Sigma$ (such as $\{\scr{W}^i_n\}$) we conclude that
  the set of $(\lambda\tau^j,\alpha)\in\widetilde{Z}$ with
  $v(\alpha^{-1})<2\lambda-1$ is Zariski-dense in all of
  $\widetilde{Z}$.
\end{rema}

\begin{coro}\label{densityfour}
  The classical points are Zariski-dense in $\widetilde{D}$.
\end{coro}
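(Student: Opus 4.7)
The strategy is to lift the density of classical points from $\widetilde{Z}$ to $\widetilde{D}$ via the local structure coming from the canonical cover. Cover $\scr{W}$ admissibly by the $\{\scr{W}_n^i\}$, each of which meets $\Sigma$ (indeed contains the weight $0\cdot\tau^i$). Over each such $X=\scr{W}_n^i$, cover $\widetilde{Z}_X$ by the elements $Y\in\scr{C}(\widetilde{Z}_X)$ of the canonical cover provided by Theorem \ref{cover}, and then cover $\widetilde{D}_X$ by the corresponding finite-over-$Y$ affinoids $\widetilde{D}_Y$. The resulting bundle of admissible covers assembles into an admissible cover of $\widetilde{D}$. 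By Lemma 2.2.3 of \cite{conradirredcpnts}, it suffices to show that the classical points of $\widetilde{D}$ are Zariski-dense in each such $\widetilde{D}_Y$.

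Fix such a $Y$ with connected image $Y'\subseteq X$, and let $N\subseteq \widetilde{S}_X\widehat{\otimes}_{\OO(X)}\OO(Y')$ be the $U_{p^2}$-invariant direct summand on which $Q^*(U_{p^2})=0$. Exactly as in the proof of Lemma \ref{densitytwo} applied with $\widetilde{h}=1$, the coefficients of $R(T)=\det(T-U_{p^2}\mid N)$ are bounded on $Y'$, producing an integer $M$ such that every $U_{p^2}$-eigenvalue occurring in any $\kappa$-fiber of $N$ has valuation at most $M$. Set $\Sigma'$ to be $\Sigma$ with the finitely many weights $\lambda\tau^j$ satisfying $2\lambda-1\leq M$ removed; the proof of Lemma \ref{sigmaisvzd} shows $\Sigma'\cap Y'$ is still Zariski-dense in $Y'$. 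Now suppose $x\in \widetilde{D}_Y(K)$ lies over a point $(\lambda\tau^j,\alpha)\in Y$ with $\lambda\tau^j\in\Sigma'$. By Lemma \ref{eigenpoints}, the associated system of eigenvalues is witnessed by a nonzero simultaneous Hecke eigenvector $F\in N\otimes_{\OO(Y'),\lambda\tau^j}K\subseteq \widetilde{S}_{\lambda\tau^j}^\dagger(4N,K)$ with $U_{p^2}F=\alpha^{-1}F$, and since $v(\alpha^{-1})\leq M<2\lambda-1$, Theorem \ref{control} forces $F$ to be classical. Hence $x$ is a classical point of $\widetilde{D}$.

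It remains to check that the preimage of $\Sigma'\cap Y'$ in $\widetilde{D}_Y$ is Zariski-dense in $\widetilde{D}_Y$, and this is where I expect the only genuine obstacle to lie. I would dispatch it by invoking the equidimensionality of $\widetilde{D}$ (Lemma 5.8 of \cite{buzzardeigenvarieties}): every irreducible component $C$ of $\widetilde{D}_Y$ is one-dimensional, its image under the finite map $\widetilde{D}_Y\to Y'$ is closed and one-dimensional in the irreducible one-dimensional $Y'$ (irreducibility of $Y'$ following from the normality, indeed smoothness, of the disk $\scr{W}_n^i$), and hence equals $Y'$. Since proper analytic subsets of a one-dimensional irreducible rigid space are zero-dimensional and $\Sigma'\cap Y'$ is infinite, the preimage of $\Sigma'\cap Y'$ in each component $C$ must be Zariski-dense, and therefore Zariski-dense in $\widetilde{D}_Y$. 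Combining with the previous paragraph gives Zariski-density of classical points in $\widetilde{D}_Y$, completing the proof.
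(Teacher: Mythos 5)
Your argument is correct and is essentially the paper's: the paper likewise combines the density of low-slope points of classical weight in the spectral curve (Remark \ref{densitythree}, i.e.\ Lemma \ref{densitytwo} with $\widetilde{h}=1$) with the control theorem (Theorem \ref{control}), pulling back along $\widetilde{D}\to\widetilde{Z}$. The only cosmetic difference is that the paper works globally and quotes Lemma 5.8 of \cite{buzzardeigenvarieties} for the surjectivity of irreducible components of $\widetilde{D}$ onto those of $\widetilde{Z}$, whereas you work piece by piece on the canonical cover and rederive that surjectivity from finiteness plus equidimensionality -- which is the same underlying argument.
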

\begin{proof}
  By Lemma 5.8 of \cite{buzzardeigenvarieties}, the finite map
  $\widetilde{D}\longrightarrow \widetilde{Z}$ carries every
  irreducible component of $\widetilde{D}$ surjectively onto an
  irreducible component of $\widetilde{Z}$.  It follows from this and
  Remark \ref{densitythree} that the set of points $x\in
  \widetilde{D}$ such that the corresponding system of eigenvalues
  $(\kappa,\alpha)$ has $\kappa\in\Sigma$ and $v(\alpha^{-1})<
  2\lambda -1$ is Zariski-dense in $\widetilde{D}$.  But by Theorem
  \ref{control}, all such points are in fact classical.
\end{proof}

\section{Interpolation of the Shimura lifting}

Let $N$ be a positive integer not divisible by the odd prime $p$.  For
the remainder of the paper all spaces of modular forms of
half-integral weight will be taken at tame level $4N$ and all spaces
of modular forms of integral weight will be taken at tame level $2N$.
Let
$$\mathbf{2}:\scr{W}\longrightarrow\scr{W}$$
denote the finite map
given by $\mathbf{2}(\kappa)=\kappa^2$ on the level of points.  We
wish to construct maps $\Sh:\widetilde{D}_\rd \longrightarrow D_\rd$
and $\widetilde{Z}_\rd\longrightarrow Z_{\rd}$ fitting into the
diagram
\begin{equation}\label{padicshimura}
\xymatrix{ \widetilde{D}_\rd \ar[r]^{\Sh}\ar[d] & D_\rd\ar[d] \\
  \widetilde{Z}_\rd\ar[r]\ar[d] & Z_\rd\ar[d] \\ 
  \scr{W}\ar[r]^{\mathbf{2}} & \scr{W} }
\end{equation}
whose vertical arrows are the canonical ones arising from the
constructions of the eigencurves and such that if $x$ is a classical
point then $\Sh(x)$ is the system of eigenvalues of the classical Shimura
lift of a classical form corresponding to $x$.  The map
$\widetilde{Z}_\rd\longrightarrow Z_\rd$ will simply be given on
points by
$$(\kappa,\alpha)\longmapsto (\kappa^2,\alpha),$$
though it is not yet
at all clear that this map is well-defined.

We record a couple of lemmas concerning nilreductions of Fredholm
varieties that we will need in the sequel.

\begin{lemm}\label{coverreduction}
  With notation as in Theorem \ref{cover}, assume moreover that $R$ is
  relatively factorial.  Then the map
  \begin{eqnarray*}
    \scr{C}(Z) & \longrightarrow & \scr{C}(Z_\rd) \\
    Y & \longmapsto & Y_\rd
  \end{eqnarray*}
  is a bijection.
\end{lemm}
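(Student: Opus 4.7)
I would verify the map $Y \mapsto Y_\rd$ is well-defined, injective, and surjective. The only genuinely nontrivial ingredient, beyond set-theoretic bookkeeping and the unique lifting of idempotents through nil ideals in commutative rings, is Buzzard's identification of elements of $\scr{C}$ with factorizations of the relevant Fredholm series; the ``relatively factorial'' hypothesis is invoked precisely to lift such factorizations from $Z_\rd$ back to $Z$.

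Well-definedness and injectivity are essentially formal. Given $Y \in \scr{C}(Z)$ with $Y' = \pi(Y)$, the nilreduction $Y_\rd$ has the same underlying set as $Y$ and hence the same image $Y'$ in $\Sp(R) = \Sp(R_\rd)$, while the composition $Y_\rd \hookrightarrow Y \to Y'$ of a closed immersion with a finite map is finite; this shows $Y_\rd \in \scr{C}(Z_\rd)$ (the idempotent condition being automatic per the remark after Theorem \ref{cover}). For injectivity, suppose $Y_1, Y_2 \in \scr{C}(Z)$ satisfy $Y_{1,\rd} = Y_{2,\rd}$. Since admissible affinoid opens in $\Sp(R)$ are determined by their underlying sets, $Y'_1 = Y'_2 =: Y'$. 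Writing $Y_i$ as the zero locus of an idempotent $e_i \in \OO(\pi^{-1}(Y'))$, the reductions $\bar e_i$ both cut out $Y_{1,\rd} = Y_{2,\rd}$ and hence coincide; the unique lifting of idempotents through the nilradical then forces $e_1 = e_2$, whence $Y_1 = Y_2$.

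For surjectivity, given $\overline Y \in \scr{C}(Z_\rd)$ with image $Y'$, Buzzard's construction (Section 5 of \cite{buzzardeigenvarieties}) identifies $\overline Y$ with a factorization $\bar P_{Y'} = \bar Q \bar Q'$ of the reduced Fredholm series, in which $\bar Q$ is a polynomial with unit leading coefficient coprime to $\bar Q'$. The hypothesis that $R$ is relatively factorial is exactly what permits this factorization to be lifted (via the unique-factorization theory of Fredholm series in \cite{conradirredcpnts}) to a compatible factorization $P_{Y'} = Q Q'$, where $Q$ is obtained by raising each irreducible factor of $\bar Q$ to its multiplicity in $P_{Y'}$; the lifted $Q$ remains a polynomial with unit leading coefficient coprime to $Q'$. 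Setting $Y$ to be the zero locus of $Q$ in $\pi^{-1}(Y')$ then yields $Y \in \scr{C}(Z)$ with $Y_\rd = \overline Y$. This surjectivity step, and in particular the invocation of relative factoriality to lift the factorization, is the only subtle point of the proof and is where I expect the main difficulty to lie.
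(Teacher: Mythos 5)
Your proof is correct in outline, but it takes a genuinely different route to surjectivity than the paper does. The paper's argument is essentially topological: it invokes the fact (from the proof of Theorem A1.1 of \cite{conradmodrig}) that $Z_\rd\longrightarrow Z$ is a homeomorphism of Grothendieck topologies, so an element $X\in\scr{C}(Z_\rd)$ has underlying set an admissible open of $Z$; endowing that set with the structure sheaf restricted from $Z$ gives the candidate preimage $Y$, finiteness of $Y\longrightarrow Y'$ then follows from Theorem A1.1 (finiteness can be checked after nilreduction), and the idempotent condition is automatic. Relative factoriality is used there only to know $Z_\rd$ is Fredholm, i.e.\ so that $\scr{C}(Z_\rd)$ makes sense via Theorem \ref{cover}. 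Your route instead passes through the correspondence between elements of $\scr{C}$ over a connected $Y'$ and factorizations of the Fredholm series, and lifts the factorization $\bar{P}_{Y'}=\bar{Q}\bar{Q}'$ by restoring multiplicities. This works, and it has the virtue of making explicit where factoriality enters and of describing the lifted $Y$ in the same terms ($Q$, $N$, etc.) used later in the paper; but it leans on more of the Fredholm machinery than the paper needs: you must know that unique factorization of Fredholm series holds over $\OO(Y')$ (not just over $R$), that $(P_{Y'})_\rd$ restricts correctly from $(P_X)_\rd$, and that the irreducible factors of a polynomial Fredholm factor with unit leading coefficient are again polynomials with unit leading coefficients, so that your reconstituted $Q$ really is one. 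These are all available in \cite{conradirredcpnts} and \cite{buzzardeigenvarieties}, but they should be cited rather than asserted. Your injectivity argument via unique lifting of idempotents is fine, though it can be shortened: $Y$ and $Y_\rd$ have the same underlying admissible open subset of $Z$, and an admissible open subspace is determined by that set, so injectivity is immediate; the paper sidesteps injectivity altogether by exhibiting a two-sided inverse.
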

\begin{proof}
  First note that $Z_\rd$ is Fredholm by Theorem 4.2.2 of
  \cite{conradirredcpnts}.  Let $Y\in \scr{C}(Z)$ with image
  $Y'\subseteq \Sp(R)$.  That $Y\longrightarrow Y'$ is finite implies
  that $Y_\rd\longrightarrow Y'_\rd = Y'$ is finite.  To get an
  idempotent that cuts out $Y_\rd$, simply pull back the idempotent
  that cuts out $Y$ through the canonical reduction map,
%  $$\pi_{1,\rd}^{-1}(Y')=\pi_1^{-1}(Y')_\rd \longrightarrow
%  \pi_1^{-1}(Y'),$$
  so the proposed map at least makes sense.
  
  Let $X\in \scr{C}(Z_\rd)$ with image $Y'\subseteq \Sp(R)$.  By the
  proof of A1.1 in \cite{conradmodrig}, the map $Z_\rd\longrightarrow
  Z$ is a homeomorphism of Grothendieck topologies.  In particular the
  underlying open set of $X$ is also an admissible open in $Z$.  As
  such, it inherits the structure of a rigid space by restricting the
  structure sheaf of $Z$ to $X$.  Let $Y$ denote the rigid space so
  obtained.  I claim that $Y\in \scr{C}(Z)$ and that the map
  $X\longmapsto Y$ is the inverse to the above map.
  
  Since reduction and passing to an admissible open commute, $Y_\rd =
  X$.  Now Theorem A1.1 of \cite{conradmodrig} implies that
  $Y\longrightarrow Y'$ is finite, so the comments following Theorem
  \ref{cover} imply that $Y\in \scr{C}(Z)$.  That these two maps are
  inverse to each other is clear. 
\end{proof}

If $F$ is a Fredholm series over a relatively factorial affinoid, let
$F_\rd$ denote the unique Fredholm series such that $Z(F_\rd) =
Z(F)_\rd$ (the existence and uniqueness of such a series is guaranteed
by Theorem 4.2.2 of \cite{conradirredcpnts}).
\begin{lemm}\label{reducedfred}
  Let $A$ and $B$ be relatively factorial affinoid algebras over $K$
  with $A$ an integral domain and let $f:\Sp(A)\longrightarrow \Sp(B)$
  be a map of rigid spaces over $K$.  Let $F$ and $G$ be Fredholm
  series over $A$ and $B$ respectively.  The following are equivalent.
  \begin{itemize}
  \item[(a)] $F_\rd$ divides $(f^*G)_\rd$ in the ring of entire series
    over $A$.
  \item[(b)] $f^*G$ vanishes on the zero locus of $F$.
  \item[(c)] $Z(F)_\rd$ is a union of irreducible components of
    $$(Z(G)\times_{\Sp(B)} \Sp(A))_\rd = Z(f^*G)_\rd.$$
  \item[(d)] there exists a unique map $$Z(F)_\rd\longrightarrow
    Z(G)_\rd$$
    that is given by $(x,\alpha)\longmapsto (f(x),\alpha)$
    on points.
  \end{itemize}
\end{lemm}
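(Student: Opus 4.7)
The plan is to establish the cycle of implications (a) $\Rightarrow$ (b) $\Rightarrow$ (c) $\Rightarrow$ (d) $\Rightarrow$ (b) together with (b) $\Rightarrow$ (a). The two implications landing at (b) are essentially immediate: if $F_\rd$ divides $(f^*G)_\rd$ as entire series over $A$, then $Z(F) = Z(F_\rd) \subseteq Z((f^*G)_\rd) = Z(f^*G)$ as point sets, giving (b); and a map $Z(F)_\rd \to Z(G)_\rd$ sending $(x,\alpha) \mapsto (f(x),\alpha)$ forces $G(f(x),\alpha) = 0$ at every point of $Z(F)_\rd$, i.e., $f^*G$ vanishes on $Z(F)$, since nilreduction preserves the underlying point set.

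The heart of the lemma is the implication (b) $\Rightarrow$ (a). Here I would appeal to Theorem 4.2.2 of \cite{conradirredcpnts}, which—because $A$ is an integral domain and is relatively factorial—gives a unique factorization of any reduced Fredholm series over $A$ into irreducible Fredholm series normalized to have constant term $1$. Write $F_\rd = \prod_i p_i$ and $(f^*G)_\rd = \prod_j q_j$ accordingly. Each $Z(p_i)$ is an irreducible Fredholm hypersurface contained in $Z(F) \subseteq Z(f^*G) = \bigcup_j Z(q_j)$, so by irreducibility it lies inside some $Z(q_j)$. Two irreducible Fredholm hypersurfaces cannot be in proper containment—again a consequence of the same theorem, since normalized irreducible Fredholm series are in bijection with their zero loci—so $Z(p_i) = Z(q_j)$ and $p_i = q_j$. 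Hence every irreducible factor of $F_\rd$ divides $(f^*G)_\rd$, proving (a).

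The remaining implications are then formal. For (b) $\Rightarrow$ (c): in the same factorization picture, the containment $Z(F)_\rd \subseteq Z(f^*G)_\rd$ combined with the fact that each irreducible component $Z(p_i)$ of the left-hand side coincides with some component $Z(q_j)$ of the right-hand side shows that $Z(F)_\rd$ is a union of irreducible components of $Z(f^*G)_\rd$. For (c) $\Rightarrow$ (d): we have a closed immersion $Z(F)_\rd \hookrightarrow Z(f^*G)_\rd = (Z(G) \times_{\Sp(B)} \Sp(A))_\rd$, and composing with the nilreduction of the projection to $Z(G)$—which factors through $Z(G)_\rd$ because $Z(F)_\rd$ is reduced—produces the required map. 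Tracing through the identification $Z(G) \times_{\Sp(B)} \Sp(A) = Z(f^*G)$, on points this sends $(x,\alpha)$ to $(f(x),\alpha)$, and uniqueness is automatic since a morphism from a reduced rigid space is determined by its values on points.

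The main obstacle is the unique-factorization step underlying (b) $\Rightarrow$ (a), and in particular the fact that an irreducible Fredholm hypersurface cannot be a proper closed subset of another irreducible Fredholm hypersurface. Both facts rest on the analysis of Fredholm hypersurfaces in \cite{conradirredcpnts}; once these are in hand the lemma unwinds quickly.
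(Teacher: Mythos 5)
Your proposal is correct, but it routes the hard implication differently from the paper. The paper's cycle is (a)$\Rightarrow$(b)$\Rightarrow$(c)$\Rightarrow$(a) together with (c)$\Rightarrow$(d)$\Rightarrow$(b): it gets (b)$\Rightarrow$(c) from Lemma 4.1.1 of \cite{conradirredcpnts} (a Fredholm series vanishing on a reduced Fredholm hypersurface exhibits it as a union of components of its own zero locus), and then (c)$\Rightarrow$(a) by quoting Corollary 2.2.6 of \cite{conradirredcpnts} to identify $Z(F)_\rd$ as a ``component part'' of $Z(f^*G)_\rd$ in the sense of \cite{colmaz} and invoking Proposition 1.3.4 of \cite{colmaz} for the divisibility of Fredholm series. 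You instead prove (b)$\Rightarrow$(a) directly by factoring $F_\rd$ and $(f^*G)_\rd$ into normalized irreducible Fredholm series via Theorem 4.2.2 of \cite{conradirredcpnts} and matching irreducible factors, then read off (c) from the same factorization. This is essentially an unwinding of the Coleman--Mazur proposition rather than a citation of it, so it is more self-contained but leans on two facts you should make explicit: that an irreducible Fredholm hypersurface contained in a (possibly infinite, but locally finite) union $\bigcup_j Z(q_j)$ lies in a single $Z(q_j)$, and that two irreducible Fredholm hypersurfaces in proper containment would violate equidimensionality --- both are available in \cite{conradirredcpnts}, and the multiplicity issue in concluding $F_\rd \mid (f^*G)_\rd$ is harmless because both series are reduced, hence products of distinct irreducibles. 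Your treatment of (a)$\Rightarrow$(b), (c)$\Rightarrow$(d) (factoring the composite to $Z(G)$ through $Z(G)_\rd$ by reducedness, with uniqueness on points), and (d)$\Rightarrow$(b) matches the paper's.
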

\begin{proof}
  Since passing to the reduction does not change the zero locus, (a)
  implies (b) trivially.  Suppose that (b) holds.  Then $(f^*G)_\rd$
  also vanishes on $Z(F)$, and (c) follows from Lemma 4.1.1 of
  \cite{conradirredcpnts}.  Now suppose that (c) holds.  By Corollary
  2.2.6 of \cite{conradirredcpnts}, $Z(F)_\rd$ is a ``component part''
  of $Z(f^*G)_\rd$ in the sense of \cite{colmaz}.  By Proposition
  1.3.4 of \cite{colmaz}, $F_\rd$ divides $(f^*G)_\rd$, and we have
  (a).  It remains to see that (d) is equivalent to these first three
  conditions. Suppose that that (c) holds.  The composite map
  $$Z(F)_\rd\hookrightarrow Z(f^*G)_\rd\longrightarrow
  Z(f^*G)=Z(G)\times_{\Sp(B)} \Sp(A)\longrightarrow Z(G)$$
  factors
  through a unique map $Z(F)_\rd\longrightarrow Z(G)_\rd$ by the
  universal property of reduction.  This map has the desired effect on
  points and is the unique one with this property since these spaces
  are reduced.  Finally, that (d) implies (b) is clear.
\end{proof}

%Let $X,\widetilde{X}\subseteq \scr{W}$ be connected admissible affinoids
%such that $\mathbf{2}$ restricts to an isomorphism $\mathbf{2}:
%\widetilde{X}\stackrel{\sim}{\longrightarrow} X$ and $\widetilde{X}$ contains
%a point in $\Sigma$ (from which it follows that $X$ does as well).
%Let $\widetilde{M}_{\widetilde{X}}$ be the module of overconvergent families
%of half-integral weight forms of level $4Np$ over $\widetilde{X}$ as
%above, and let $M_X$ be the module of integral weight families of
%level $2Np$ over $X$.  Let $\widetilde{P}_{\widetilde{X}}(T) = \det(1-
%U_{p^2}T\ |\ \widetilde{M}_{\widetilde{X}})$ as above, and let $P(T) =
%\det(1-U_pT\ |\ M_X)$. 

\begin{prop}\label{prop:gwelldefined}
  Let $X$ and $\widetilde{X}$ be connected admissible affinoid
  subdomains of $\scr{W}$ such that $\mathbf{2}$ restricts to a map
  $\mathbf{2}:\widetilde{X}\longrightarrow X$, and suppose that
  $\widetilde{X}$ contains a point of $\Sigma$.  Let $h$ be a
  polynomial over $\OO(X)$ in the symbols $T_\ell$, $U_\ell$, and
  $\ip{d}_{2N}$ and let $\widetilde{h}$ denote the corresponding
  polynomial obtained by replacing these symbols by $T_{\ell^2}$,
  $U_{\ell^2}$, and $\ip{d^2}_{4N}$, respectively, and pulling back
  the coefficients to $\OO(\widetilde{X})$.  Let $$P_X^h(T) =
  \det(1-hU_p T\mid S_X)$$
  and
  $$\widetilde{P}_{\widetilde{X}}^{\widetilde{h}}(T) = \det(1-
  \widetilde{h}U_{p^2}T \mid\widetilde{S}_{\widetilde{X}}).$$
  Then
  $\widetilde{P}_{\widetilde{X}}^{\widetilde{h}}(T)_\rd$ divides
  $\mathbf{2}^*P_X^h(T)_\rd$.
\end{prop}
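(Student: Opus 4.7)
The approach is to apply Lemma~\ref{reducedfred} with $A=\OO(\widetilde{X})$, $B=\OO(X)$, $f=\mathbf{2}$, $F=\widetilde{P}^{\widetilde{h}}_{\widetilde{X}}(T)$, and $G=P^h_X(T)$. The hypotheses on $A$ and $B$ (relative factoriality and $A$ being an integral domain) are inherited from the smoothness of $\scr{W}$ together with the connectedness of $\widetilde{X}$. The desired divisibility is precisely condition (a) of that lemma, so by its equivalence with condition (b) it suffices to show that the entire series $\mathbf{2}^{*}P^h_X(T)$ vanishes at every point of $\widetilde{Z}^{\widetilde{h}}_{\widetilde{X}}=Z(\widetilde{P}^{\widetilde{h}}_{\widetilde{X}})\subseteq\widetilde{X}\times\A^1$.

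For this, the plan is to invoke Zariski density of classical points. The proof of Lemma~\ref{densitytwo} gives Zariski density of the classical locus after discarding any cofinite subset of weights in $\Sigma\cap\widetilde{X}$; the same argument applies after further discarding the finitely many weights $\lambda\tau^j\in\Sigma$ with $\lambda\in\{0,1\}$, so the set of classical points of $\widetilde{Z}^{\widetilde{h}}_{\widetilde{X}}$ with $k=2\lambda+1\geq 5$ remains Zariski-dense. Since $\mathbf{2}^{*}P^h_X(T)$ is analytic on $\widetilde{X}\times\A^1$, it is enough to check its vanishing at every such classical point.

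Fix such a classical point $(\kappa,\alpha)=(\lambda\tau^j,\alpha)$ with $\lambda\geq 2$. By definition, after possibly enlarging $K$, there is a nonzero classical form $F\in\widetilde{S}^{\cl}_{k/2}(4Np,K)^{\tau^j}$ with $\widetilde{h}U_{p^2}F=\alpha^{-1}F$. Decomposing this finite-dimensional classical space into generalized eigenspaces for the commutative algebra generated by $T_{\ell^2}$, $U_{\ell^2}$, and $\ip{d}_{4N}$, and observing that on any generalized eigenspace $\widetilde{h}U_{p^2}-\alpha^{-1}$ is a scalar plus nilpotent (hence invertible whenever the scalar part is nonzero), one extracts a nonzero genuine Hecke eigenform $F^{\ast}$ in the classical space satisfying $\widetilde{h}U_{p^2}F^{\ast}=\alpha^{-1}F^{\ast}$, with eigenvalues $\alpha_\ell$ for $T_{\ell^2}$ and $U_{\ell^2}$, $\chi(d)$ for $\ip{d}_{4N}$, and $\beta$ for $U_{p^2}$. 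Applying Theorem~\ref{padicclassicallift} (legal because $k\geq 5$) produces a classical cusp form $f$ of integral weight $2\lambda$, level $2Np$, and nebentypus $\tau^{2j}\chi^2$ on which $T_\ell f=U_\ell f=\alpha_\ell f$ and $U_pf=\beta f$. Using $\ip{d}_{2N}f=\chi^2(d)f=\chi(d^2)f$ (which matches the $\ip{d^2}_{4N}$-eigenvalue on $F^{\ast}$) together with the fact that the coefficients of $\widetilde{h}$ at $\kappa$ are, by construction, those of $h$ at $\mathbf{2}(\kappa)=\kappa^2$, a direct monomial-by-monomial comparison yields $hU_pf=\alpha^{-1}f$. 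Thus $(\mathbf{2}(\kappa),\alpha)\in Z(P^h_X)$, i.e., $(\mathbf{2}^{*}P^h_X)(\kappa,\alpha)=0$, as required.

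The main technical work here is bookkeeping rather than a single hard step: one must carefully track the Hecke and diamond eigenvalues through the Shimura correspondence (most delicately, the identification of $\ip{d}_{2N}$ on $f$ with $\ip{d^2}_{4N}$ on $F^{\ast}$ via $\chi^2(d)=\chi(d^2)$), and pass from the given form $F$, which only satisfies the $\widetilde{h}U_{p^2}$-eigenequation, to a genuine simultaneous Hecke eigenform $F^{\ast}$ with the $\alpha^{-1}$-eigenvalue preserved. Once these are in hand, Lemma~\ref{densitytwo} propagates the pointwise vanishing to all of $\widetilde{Z}^{\widetilde{h}}_{\widetilde{X}}$ and Lemma~\ref{reducedfred} converts it into the claimed divisibility of Fredholm series.
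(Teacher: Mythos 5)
Your proposal is correct and follows essentially the same route as the paper: reduce via Lemma~\ref{reducedfred} to pointwise vanishing on the zero locus, use Lemma~\ref{densitytwo} to restrict attention to classical points of weight $\lambda\tau^j$ with $\lambda\geq 2$, extract a simultaneous Hecke eigenform from the $\widetilde{h}U_{p^2}$-eigenspace (the paper does this by the same commutativity observation, over a finite extension $L/K$, then descends using that the eigenvalues lie in $K$), and apply Theorem~\ref{padicclassicallift}. Your generalized-eigenspace justification for preserving the $\alpha^{-1}$-eigenvalue is a slightly more explicit version of a step the paper leaves implicit, but the argument is the same.
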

\begin{proof}
  By Lemma \ref{reducedfred} it suffices to check that
  $\mathbf{2}^*P_X^h(T)$ vanishes on the zero locus of
  $\widetilde{P}_{\widetilde{X}}^{\widetilde{h}}$.  By Lemma
  \ref{densitytwo}, the classical points are dense in this locus.  The
  same is true of the set of classical points of weight
  $\lambda\tau^j$ with $\lambda\geq 2$ since this omits only finitely
  many points in $\Sigma$.  Let $(\lambda\tau^j, \alpha)$ be a point
  in the zero locus of $\widetilde{P}_{\widetilde{X}}^{\widetilde{h}}$
  with $\lambda\geq 2$ and $\alpha$ in a finite extension $K$ of
  $\Q_p$ and define $k=2\lambda+1$.  Then the space
  $$V = \{F\in \widetilde{S}^{\cl}_{k/2}(4N,K)^{\tau^j}\ |\ 
  \widetilde{h}_{\lambda\tau^j}U_{p^2}F = \alpha^{-1}F\},$$
  where
  $\widetilde{h}_{\lambda\tau^j}$ denotes the polynomial
  $\widetilde{h}$ with coefficients evaluated at $\lambda\tau^j$, is
  nonzero.  Since all of the operators $T_{\ell^2}$, $U_{\ell^2}$, and
  $\ip{d}_{4N}$ commute with $\widetilde{h}_{\lambda\tau^j}U_{p^2}$,
  they act on the space $V$ and hence there exists a finite extension
  $L/K$ and a nonzero element $F\in
  \widetilde{S}^{\cl}_{k/2}(4N,L)^{\tau^j}$ that is a simultaneous
  eigenform for all of these operators.  In particular it has a
  character $\chi$ for the action of the $\ip{d}_{4N}$, and by
  Theorem \ref{padicclassicallift} there exists a nonzero classical
  form $f$ of level $2N$ and weight $k-1$ defined over $K$ that lies
  in the $\tau^{2j}$-eigenspace for $\ip{d}_p$ such that
  $$h_{(2\lambda)\tau^{2j}}U_p f = \alpha^{-1}f.$$
  Since $\alpha,\tau(d)\in K$ for all
  $d\in(\Z/p\Z)^\times$, there must also be a form $f$ defined over
  $K$ with these properties.  Thus $((2\lambda)\tau^{2j},\alpha)$ is a
  root of $P_X^h$, which is to say that $(\lambda\tau^j,\alpha)$ is a
  root of $\mathbf{2}^*P_X^h$.
\end{proof}

Let $X\subseteq \scr{W}$ be an admissible affinoid open and let
$\kappa\in \scr{W}(K)$ be a point.  For any module $M$ over $\OO(X)$ we
denote by $M_\kappa$ the vector space $M\otimes_{\OO(X),\kappa}K$ and
for any power series $P$ over $\OO(X)$ we denote by $P_\kappa$ the
power series over $K$ obtained by evaluating the coefficients of $P$
at $\kappa$.

\begin{coro}\label{divisibilitykappa}
  Let $\kappa\in\scr{W}(K)$, let $h_0$ be a polynomial over $K$ in the
  symbols $T_{\ell}$, $U_{\ell}$, and $\ip{d}_{2N}$, and let
  $\widetilde{h}_0$ denote the polynomial obtained by replacing these
  symbols by $T_{\ell^2}$, $U_{\ell^2}$, and $\ip{d^2}_{4N}$,
  respectively.  Pick a connected admissible affinoid open
  $\widetilde{X}$ containing $\kappa$ and let $X =
  \mathbf{2}(\widetilde{X})$.  Then $$\det(1 -
  \widetilde{h}_0U_{p^2}T\mid
  (\widetilde{S}_{\widetilde{X}})_{\kappa})_\rd\mid 
  \det(1 - h_0U_pT\mid (S_X)_{\kappa^2})_\rd$$
\end{coro}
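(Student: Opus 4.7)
The plan is to deduce Corollary~\ref{divisibilitykappa} by specializing the divisibility in Proposition~\ref{prop:gwelldefined} at the point $\kappa$. To this end, first pick a connected admissible affinoid $\widetilde{X}$ containing $\kappa$ \emph{and} a point of $\Sigma$ (for instance, $\scr{W}_n^i$ for $n$ large and $i$ appropriate); note that up to the natural links, the two sides of the desired divisibility depend only on $\kappa$, so this choice is without loss of generality. Set $X = \mathbf{2}(\widetilde{X})$ as in the hypothesis of the proposition.

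Next, since the coefficients of $h_0, \widetilde{h}_0$ lie in $K$, which may not be contained in the image of $\OO(\widetilde{X}) \to K$ under evaluation at $\kappa$, I would base-change along $\Q_p \to K$: set $\widetilde{X}_K = \widetilde{X} \widehat{\otimes}_{\Q_p} K$ and $X_K = X \widehat{\otimes}_{\Q_p} K$. Now $K \hookrightarrow \OO(\widetilde{X}_K), \OO(X_K)$, so $h_0, \widetilde{h}_0$ may be viewed as polynomials over these rings with constant coefficients. The proof of Proposition~\ref{prop:gwelldefined} carries through verbatim in this base-changed setting --- its substantive inputs (density of classical points via Lemma~\ref{densitytwo}, and the classical Shimura correspondence via Theorem~\ref{padicclassicallift}) are insensitive to the extension of scalars --- and yields
$$\widetilde{P}^{\widetilde{h}_0}_{\widetilde{X}_K}(T)_\rd \mid \mathbf{2}^* P^{h_0}_{X_K}(T)_\rd$$
as entire series over $\OO(\widetilde{X}_K)$.

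Finally, I would specialize both sides at $\kappa$ via the (now surjective) evaluation map $\OO(\widetilde{X}_K) \twoheadrightarrow K$, recalling that $\mathbf{2}(\kappa) = \kappa^2$. Since base change is compatible with fiber-taking, $(\widetilde{S}_{\widetilde{X}_K})_\kappa = (\widetilde{S}_{\widetilde{X}})_\kappa$ and $(S_{X_K})_{\kappa^2} = (S_X)_{\kappa^2}$, so the specialized series are exactly the (unreduced) characteristic power series in the statement. To convert this ``reduced-then-specialized'' divisibility into the desired ``specialized-then-reduced'' form, I would invoke the identity $((F_\rd)|_\kappa)_\rd = (F|_\kappa)_\rd$, valid at a $K$-rational point because both sides have the same underlying zero set in $\A^1$ (the $\alpha \in K$ with $(\kappa,\alpha) \in Z(F)$); applying $(\cdot)_\rd$ once more to both sides of the specialized divisibility then yields the corollary. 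The main obstacle here is purely bookkeeping --- tracking the interplay between reduction, specialization, and base change --- since the substantive content is already supplied by Proposition~\ref{prop:gwelldefined}.
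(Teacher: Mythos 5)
Your proposal is correct and follows essentially the same route as the paper: both reduce the corollary to Proposition~\ref{prop:gwelldefined} over an affinoid $\widetilde{X}$ enlarged to meet $\Sigma$, and then specialize at $\kappa$, using Lemma 2.13 of \cite{buzzardeigenvarieties} to identify the specialized Fredholm series with the fiberwise ones and Lemma~\ref{reducedfred}-type reasoning to commute reduction with specialization. The one genuine difference is how $h_0$ is promoted to a family: the paper simply chooses a polynomial $h$ over $\OO(X)$ with $h_{\kappa^2}=h_0$, which tacitly assumes the coefficients of $h_0$ lie in the image of the evaluation map $\OO(X)\to K$ (i.e., in the residue field of the point underlying $\kappa^2$), whereas you base-change the modules and weight space to $K$ so that $h_0$ becomes a constant family. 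Your variant is slightly more robust on this point (and the extra generality matters, since Lemma~\ref{keydivisibility} later applies the corollary to $h_0$ with coefficients genuinely in $K$), at the modest cost of checking that the density and classicality inputs to Proposition~\ref{prop:gwelldefined} survive the extension of scalars, which they do.
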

\begin{proof}
  Note that $X$ is necessarily a connected admissible affinoid (and
  moreover $\mathbf{2}:\widetilde{X}\longrightarrow X$ is an
  isomorphism), so the assertion at least makes sense.  By enlarging
  $\widetilde{X}$ if necessary we may assume that $\widetilde{X}$
  contains an element of $\Sigma$, since the links $\alpha_{ij}$
  ensure that this enlargement does not affect the claimed
  divisibility.  Let $h$ be any polynomial over $\OO(X)$ in the symbols
  $T_{\ell}$, $U_{\ell}$, and $\ip{d}_{2N}$ with $h_{\kappa^2} =h_0$,
  and let $\widetilde{h}$ denote the polynomial over
  $\OO(\widetilde{X})$ obtained by replacing these symbols by
  $T_{\ell^2}$, $U_{\ell^2}$, and $\ip{d^2}_{4N}$, respectively, and
  pulling back the coefficients via $\mathbf{2}^*$.  Clearly we have
  $\widetilde{h}_{\kappa}=\widetilde{h}_0$.  By Proposition
  \ref{prop:gwelldefined} we have $$\det(1- \tilde{h}U_{p^2}T\mid
  \widetilde{S}_{\widetilde{X}})_\rd \mid
  (\mathbf{2}^*\det(1-hU_pT\mid S_X))_\rd.$$ 
  The result now follows from Lemma 2.13 of
  \cite{buzzardeigenvarieties} and Lemma \ref{reducedfred} by
  specializing to $\kappa$.
\end{proof}

\begin{coro}\label{gonx}
  Let $\widetilde{X}\subseteq\scr{W}$ be a connected admissible
  affinoid open and let $X=\mathbf{2}(\widetilde{X})$.  There is a
  unique finite map $\widetilde{Z}_{\widetilde{X},\rd}\longrightarrow
  Z_{X,\rd}$ having the effect $(\kappa,\alpha)\longmapsto
  (\kappa^2,\alpha)$ on points.
\end{coro}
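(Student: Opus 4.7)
The plan is to apply Lemma \ref{reducedfred}, specifically the implication (b)$\Rightarrow$(d), with $f=\mathbf{2}:\widetilde{X}\to X$, $F=\widetilde{P}_{\widetilde{X}}(T)$, and $G=P_X(T)$. The pointwise content is already packaged in Corollary \ref{divisibilitykappa}, which supplies condition (b) essentially for free, and finiteness comes from the geometric factorization produced along the way.

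Before invoking the lemma I would verify its hypotheses. Since $p$ is odd, $\mathbf{2}:\scr{W}^i\longrightarrow\scr{W}^{2i\bmod(p-1)}$ is an isomorphism of each connected component of $\scr{W}$ (the same fact that is used in Corollary \ref{divisibilitykappa}), so its restriction $\mathbf{2}|_{\widetilde{X}}:\widetilde{X}\longrightarrow X$ is an isomorphism of connected admissible affinoids and in particular finite. Moreover, $\widetilde{X}$ is a connected admissible affinoid subdomain of the smooth one-dimensional rigid space $\scr{W}^i$, and hence $\OO(\widetilde{X})$ is a factorial integral domain, so Lemma \ref{reducedfred} is available.

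For condition (b), I want $\mathbf{2}^*P_X$ to vanish on $\widetilde{Z}_{\widetilde{X}}$. Given a $K$-valued point $(\kappa,\alpha)\in\widetilde{Z}_{\widetilde{X}}(K)$, the scalar $\alpha$ is by definition a root of $(\widetilde{P}_{\widetilde{X}})_\kappa=\det(1-U_{p^2}T\mid(\widetilde{S}_{\widetilde{X}})_\kappa)$. Corollary \ref{divisibilitykappa} with $h_0=1$ gives
$$\det(1-U_{p^2}T\mid(\widetilde{S}_{\widetilde{X}})_\kappa)_\rd\ \bigm|\ \det(1-U_pT\mid(S_X)_{\kappa^2})_\rd,$$
so $\alpha$ is also a root of the right-hand Fredholm series $(P_X)_{\kappa^2}=(\mathbf{2}^*P_X)_\kappa$, which is exactly the vanishing of $\mathbf{2}^*P_X$ at $(\kappa,\alpha)$. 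Lemma \ref{reducedfred}(d) then produces the unique map $\widetilde{Z}_{\widetilde{X},\rd}\longrightarrow Z_{X,\rd}$ whose effect on points is $(\kappa,\alpha)\longmapsto(\kappa^2,\alpha)$.

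For finiteness I would use (b)$\Rightarrow$(c): this exhibits $\widetilde{Z}_{\widetilde{X},\rd}$ as a union of irreducible components of $(Z_X\times_{X,\mathbf{2}}\widetilde{X})_\rd$, so the canonical map into the latter is a closed immersion, in particular finite. Composing with the base-change projection $(Z_X\times_{X,\mathbf{2}}\widetilde{X})_\rd\longrightarrow Z_{X,\rd}$, which is finite because $\mathbf{2}:\widetilde{X}\to X$ is, yields the desired finite map. The only mild bookkeeping issue -- and arguably the main obstacle -- is translating the fiberwise content of Corollary \ref{divisibilitykappa} into the analytic-set vanishing statement of Lemma \ref{reducedfred}(b); once that translation is recorded the result drops out.
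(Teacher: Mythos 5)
Your proof is correct and follows essentially the same route as the paper: both establish condition (b) of Lemma \ref{reducedfred} for $F=\widetilde{P}_{\widetilde{X}}$, $G=P_X$, $f=\mathbf{2}$, and both obtain finiteness by factoring the resulting map as a closed immersion onto components of $(Z_X\times_X\widetilde{X})_\rd$ followed by the finite projection. The only cosmetic difference is that you supply (b) fiberwise from Corollary \ref{divisibilitykappa} (which already hides the enlargement to an affinoid meeting $\Sigma$), whereas the paper applies Proposition \ref{prop:gwelldefined} to $\scr{W}^i_n$ and then restricts via Lemma 2.13 and the links.
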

\begin{proof}
  Choose integers $i$ and $n$ with $\widetilde{X}\subseteq
  \scr{W}^i_n$, so that $X\subseteq \scr{W}_n^{2i}$.  By Proposition
  \ref{prop:gwelldefined} (with $\widetilde{h}=1$) and Lemma
  \ref{reducedfred}, $\mathbf{2}^*P_{\scr{W}_n^i}(T)$ vanishes on the
  zero locus of $\widetilde{P}_{\scr{W}_n^{2i}}(T)$.  Let $\iota$ and
  $\widetilde{\iota}$ denote the inclusions of $X$ and $\widetilde{X}$
  into $\scr{W}^{2i}_n$ and $\scr{W}^i_n$, respectively.  Then 
  $$\widetilde{\iota}^*\mathbf{2}^*P_{\scr{W}_n^{2i}}(T) =
  \mathbf{2}^*\iota^*P_{\scr{W}_n^{2i}}(T)$$ vanishes on the zero
  locus of $\widetilde{\iota}^*P_{\scr{W}^i_n}(T)$.  Lemma 2.13
  and the links $\alpha_{ij}$ ensure that
  $$\iota^*P_{\scr{W}_n^{2i}}(T) = P_X(T)\ \ \ \mbox{and}\ \ \ 
  \widetilde{\iota}^*\widetilde{P}_{\scr{W}_n^i}(T) =
  \widetilde{P}_{\widetilde{X}}(T).$$
  The existence of a map having
  the indicated effect on points now follows from Lemma
  \ref{reducedfred}, and it remains to see that this map is finite.
  But by Lemma \ref{reducedfred} the map is the composition of the
  inclusion of the union of irreducible components
  $\widetilde{Z}_{\widetilde{X},\rd}$ into $(Z_X\times_X
  \widetilde{X})_\rd$ and the (nilreduction of) the projection
  $Z_X\times_X \widetilde{X}\longrightarrow Z_X$.  The former is
  obviously finite and the latter is finite because the map
  $\mathbf{2}:\widetilde{X} \longrightarrow X$ is an isomorphism.
\end{proof}

For each $i$, the $\scr{W}_n^i$ form a nested sequence, so we may glue
the maps furnished by Corollary \ref{gonx} applied to
$\widetilde{X}=\scr{W}^i_n$ and $X=\scr{W}_n^{2i}$ over increasing $n$
to obtain a diagram
$$\xymatrix{ \widetilde{Z}_{\scr{W}^i, \rd}\ar[r]\ar[d] &
  Z_{\scr{W}^{2i}, \rd}\ar[d] \\ \scr{W}^i \ar[r]^{\mathbf{2}} &
  \scr{W}^{2i}}$$
for each $0\leq i < p-1$.  Now since $\scr{W}$ is
the disjoint union of the $\scr{W}^i$, we obtain the bottom square in
the diagram (\ref{padicshimura}).  Let
$g:\widetilde{Z}_\rd\longrightarrow Z_\rd$ be the map so obtained.  On
the level of points, $g$ is simply given by $g(\kappa,\alpha) = (\kappa^2,
\alpha)$, as desired.  The next lemma shows that $g$ interacts well
with the canonical covers of its source and target.

\begin{lemm}\label{coverscompatible}
  Let $\widetilde{X}\subseteq \scr{W}$ be a connected admissible
  affinoid open and let $X = \mathbf{2}(\widetilde{X})$.  Let $Y$ be
  an element of the canonical cover $\scr{C}(Z_X)$ of $Z_X$ with
  connected image $Y'\subseteq X$.  Then $g^{-1}(Y_\rd)$ is either
  empty or is an element of
  $\scr{C}(\widetilde{Z}_{\widetilde{X},\rd})$ with connected image
  $\mathbf{2}^{-1}(Y')$.
\end{lemm}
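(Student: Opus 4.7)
The plan is to exploit that the squaring map $\mathbf{2}:\widetilde{X}\to X$ is an isomorphism of connected admissible affinoids (noted in the proof of Corollary \ref{gonx}, since on each $\scr{W}^i_n$, identified via $\kappa\mapsto\kappa(1+p)$ with the closed ball of radius $|p|$ about $1$, squaring is bijective with convergent analytic inverse). In particular $\widetilde{Y}':=\mathbf{2}^{-1}(Y')$ is a connected admissible affinoid open of $\widetilde{X}$ mapping isomorphically onto $Y'$, and the base-change projection $Z_X\times_X\widetilde{X}\to Z_X$ is itself an isomorphism. Under this identification, the construction in Corollary \ref{gonx} presents $g$ as the inclusion of $\widetilde{Z}_{\widetilde{X},\rd}$ as a union of irreducible components of $Z_{X,\rd}$.

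From here I would deduce that, set-theoretically, $g^{-1}(Y_\rd)=Y_\rd\cap\widetilde{Z}_{\widetilde{X},\rd}$ is a union of irreducible components of $Y_\rd$: the irreducible components of an admissible open $Y_\rd\subseteq Z_{X,\rd}$ are exactly the intersections of $Y_\rd$ with those irreducible components of $Z_{X,\rd}$ meeting it. Assuming $g^{-1}(Y_\rd)$ is nonempty, I would then verify the defining conditions of $\scr{C}(\widetilde{Z}_{\widetilde{X},\rd})$ from Theorem \ref{cover}: by Lemma \ref{coverreduction}, $Y_\rd\in\scr{C}(Z_{X,\rd})$ is an affinoid, finite over $Y'$, so any union of its irreducible components is a closed reduced, hence affinoid, subspace of $Y_\rd$, and remains finite over $Y'\cong\widetilde{Y}'$. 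The idempotent condition is then automatic from the first two, as observed in the text immediately after Theorem \ref{cover}.

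The only genuinely structural step — and the one I anticipate as the main obstacle — is to show that $\widetilde{\pi}(g^{-1}(Y_\rd))$ is \emph{all} of $\widetilde{Y}'$ rather than merely a proper subset. Here I would use that $\pi:Y\to Y'$ is finite and flat of constant positive rank (the projective direct summand $N$ in the Fredholm construction of $\scr{C}$ has constant positive rank on the connected affinoid $Y'$), and that $Y'$ is irreducible as a connected admissible open in the smooth one-dimensional ambient $\scr{W}^i_n$. Flatness of positive rank forces $Y$ to be equidimensional with $Y'$, so every irreducible component of $Y$ has closed image of full dimension in the irreducible $Y'$, hence surjects onto $Y'$; the same persists after reduction, so any nonempty union of irreducible components of $Y_\rd$ surjects onto $Y'\cong\widetilde{Y}'$, as required.
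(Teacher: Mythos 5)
Your argument is correct and follows essentially the same route as the paper: identify $g$ over $\widetilde{X}$, via the isomorphism $\mathbf{2}:\widetilde{X}\to X$, with the inclusion of $\widetilde{Z}_{\widetilde{X},\rd}$ as a union of irreducible components, conclude (Corollary 2.2.9 of \cite{conradirredcpnts}) that $g^{-1}(Y_\rd)$ is a possibly empty union of irreducible components of $Y_\rd\times_X\widetilde{X}$, hence affinoid and finite over $\mathbf{2}^{-1}(Y')$, and observe that the idempotent condition is automatic. Your explicit check that the image is \emph{all} of $\mathbf{2}^{-1}(Y')$ --- via finite flatness of $Y\to Y'$ of positive rank and irreducibility of the connected smooth $Y'$ --- supplies a step the paper's proof leaves implicit, and it is the same argument the paper itself invokes inside the proof of Lemma \ref{densitytwo}.
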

\begin{proof}
  Since $g$ and $\mathbf{2}$ are finite, $g^{-1}(Y_\rd)$ and
  $\mathbf{2}^{-1}(Y')$ are affinoids. The latter is connected since
  $Y'$ is connected and $\mathbf{2}:\widetilde{X}\longrightarrow X$ is an
  isomorphism.
  
  By the construction of $g$ (see the proof of Lemma
  \ref{reducedfred}), $g^{-1}(Y_\rd)$ is the intersection inside
  $Z_{X,\rd}\times_X \widetilde{X}$ of the admissible affinoid
  $Y_\rd\times_X \widetilde{X}$ and the union of irreducible components
  $\widetilde{Z}_{\widetilde{X},\rd}$.  This intersection is a (possibly
  empty) union of irreducible components of the admissible affinoid
  $Y_\rd\times_X\widetilde{X}$ by Corollary 2.2.9 of
  \cite{conradirredcpnts}.  But $$Y_\rd\times_X \widetilde{X} \cong
  Y_\rd\times_{Y'} \mathbf{2}^{-1}(Y')$$
  is finite over
  $\mathbf{2}^{-1}(Y')$ since $Y_\rd$ is finite over $Y'$, and hence
  so is any nonempty subspace of components, such as $g^{-1}(Y_\rd)$
  (if nonempty).

  That $g^{-1}(Y_\rd)$ is disconnected from its complement in the full
  preimage of $\mathbf{2}^{-1}(Y')$ follows from the analogous property
  of $Y_\rd$; one simply pulls back the idempotent that cuts out $Y_\rd$
  though $g$ to get one that cuts out $g^{-1}(Y_\rd)$.
\end{proof}

Let $X$,$\widetilde{X}$, and $Y$ be as in the previous lemma and
assume that $g^{-1}(Y)\neq \emptyset$ so that $g^{-1}(Y_\rd)$ is in
$\scr{C}(\widetilde{Z}_{\widetilde{X},\rd})$ with connected image
$\mathbf{2}^{-1}(Y')$. 
  Let
$$P_{Y'}(T) = Q(T) Q'(T)$$
and
$$\widetilde{P}_{\mathbf{2}^{-1}(Y')}(T) = \widetilde{Q}(T)
\widetilde{Q}'(T)$$
denote the factorizations arising from the choice
of $Y$ and the $\widetilde{Y}\in
\scr{C}(\widetilde{Z}_{\widetilde{X}})$ of which $g^{-1}(Y_\rd)$ is
the underlying reduced affinoid (this well-defined by Lemma
\ref{coverreduction}).  
Since $g$ restricts to maps
$$g^{-1}(Y_\rd)\longrightarrow Y_\rd\ \ \ \mbox{and}\ \ \ 
\widetilde{Z}_{\mathbf{2}^{-1}(Y'),\rd}\setminus g^{-1}(Y_\rd)
\longrightarrow Z_{Y',\rd}\setminus Y_\rd,$$
Lemma \ref{reducedfred}
ensures that $\widetilde{Q}_\rd|(\mathbf{2}^*Q)_\rd$ and
$\widetilde{Q'}_\rd|(\mathbf{2}^*Q')_\rd$.  Let $A$ and $B$ denote the
affinoid algebras of $Y'$ and $\mathbf{2}^{-1}(S)$, respectively.  Let
$$S_{X}\widehat{\otimes}_{\OO(X)}A = N\oplus F$$
and
$$\widetilde{S}_{\OO(\widetilde{X})}\widehat{\otimes}B\cong \widetilde{N}
\oplus \widetilde{F}$$
denote the corresponding decompositions of the
spaces of families of cusp forms.

To construct the $p$-adic Shimura lift $\Sh$ and complete the diagram
(\ref{padicshimura}), we will construct the part covering
$g:g^{-1}(Y_\rd)\longrightarrow Y_\rd$ for each $Y$ and glue.  Let
$\mathbf{T}(Y)$ denote the $A$-subalgebra of $\End_R(N)$ generated by
$T_\ell$, $U_\ell$, and $\ip{d}_{2N}$ and let
$\widetilde{\mathbf{T}}(\widetilde{Y})$ denote the $B$-subalgebra of
$\End_S(\widetilde{N})$ generated by $T_{\ell^2}$, $U_{\ell^2}$, and
$\ip{d}_{4N}$.  We wish to show that the map
\begin{eqnarray}\label{shimuraonalgebras}
\mathbf{T}(Y)_\rd & \longrightarrow &
\widetilde{\mathbf{T}}(\widetilde{Y})_\rd \\ \nonumber
T_\ell & \longmapsto & T_{\ell^2} \\\nonumber
U_\ell & \longmapsto & U_{\ell^2} \\\nonumber
\ip{d}_{2N} & \longmapsto & \ip{d^2}_{4N}
\end{eqnarray}
that is given by $\mathbf{2}^*:A\longrightarrow B$ on coefficients, is
well-defined.  The following lemma furnishes the key divisibility
needed to prove this well-definedness.

\begin{lemm}\label{keydivisibility}
  Let $h$ be a polynomial over $A$ in the symbols $T_\ell$, $U_\ell$,
  and $\ip{d}_{2N}$, and let $\widetilde{h}$ be the polynomial over
  $B$ obtained by replacing these symbols by $T_{\ell^2}$,
  $U_{\ell^2}$, and $\ip{d^2}_{4N}$, respectively, and applying the
  map $\mathbf{2}^*:A\longrightarrow B$ to the coefficients.  Then
  $$\det(1- \widetilde{h}U_{p^2}T\mid \widetilde{N})_\rd \mid
  (\mathbf{2}^*\det(1- h   U_pT\mid N))_\rd.$$ 
\end{lemm}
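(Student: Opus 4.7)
Plan. I would mirror the strategy of Proposition \ref{prop:gwelldefined}, confining attention to the slope-components $\widetilde{N}$ and $N$ and using the divisibility $\widetilde{Q}_\rd\mid(\mathbf{2}^*Q)_\rd$ built into the construction of $g$ to steer the Shimura lift into the $N$-component. Set $\widetilde{R}:=\det(1-\widetilde{h}U_{p^2}T\mid\widetilde{N})$ and $R:=\det(1-hU_pT\mid N)$; both are polynomials in $T$ with constant term $1$, since $\widetilde{N}$ and $N$ are projective of finite rank, and the zero locus $Z:=Z(\widetilde{R})\subseteq\mathbf{2}^{-1}(Y')\times\A^1$ is finite flat over $\mathbf{2}^{-1}(Y')$. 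By Lemma \ref{reducedfred} it then suffices to verify that $\mathbf{2}^*R$ vanishes on $Z$.

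For the density step, I enlarge $\widetilde{X}$ if necessary to contain a point of $\Sigma$ (the links $\alpha_{ij}$ render this harmless). By Remark \ref{densitythree} classical points are Zariski-dense in $\widetilde{Z}_{\widetilde{X},\rd}$, and by Lemma \ref{coverscompatible} the nonempty admissible open $g^{-1}(Y_\rd)$ is a union of irreducible components of $\widetilde{Z}_{\widetilde{X},\rd}$, so it contains classical points. Projecting, $\mathbf{2}^{-1}(Y')\cap\Sigma\neq\emptyset$, and Lemma \ref{sigmaisvzd} upgrades this to Zariski density of $\mathbf{2}^{-1}(Y')\cap\Sigma$ in $\mathbf{2}^{-1}(Y')$. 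Since the $U_{p^2}$-slopes on $\widetilde{N}$ are bounded independently of the weight, Theorem \ref{control} ensures that for every $\lambda\tau^j\in\mathbf{2}^{-1}(Y')\cap\Sigma$ with $\lambda$ sufficiently large, every point of the fiber of $Z$ over $\lambda\tau^j$ comes from a classical simultaneous Hecke eigenform; combined with finite flatness of $Z\to\mathbf{2}^{-1}(Y')$, this yields Zariski density of classical points in $Z$.

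At such a classical point $(\kappa,\alpha)\in Z$, pick a simultaneous Hecke eigenform $F\in\widetilde{N}_\kappa\subseteq\widetilde{S}^\dagger_\kappa(4N,K)$ with Hecke eigenvalues $\alpha_\ell$, diamond character $\chi$, $U_{p^2}$-eigenvalue $\beta$ and $\widetilde{h}_\kappa$-eigenvalue $\gamma$, so that $\gamma\beta=\alpha^{-1}$. Theorem \ref{padicclassicallift} produces a classical integral weight eigenform $f$ of weight $k-1$, level $2Np$, and nebentype $\tau^{2j}\chi^2$, sharing the same $T_\ell$ and $U_\ell$ eigenvalues; in particular $U_pf=\beta f$. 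Now $\beta$ is a root of $\widetilde{Q}_\kappa$, and the divisibility $\widetilde{Q}_\rd\mid(\mathbf{2}^*Q)_\rd$ from the construction of $g$, specialized at the classical weight $\kappa$, shows that $\beta$ is also a root of $Q_{\kappa^2}$; coprimality of $Q$ and $Q'$ then forces $f\in N_{\kappa^2}$. The $h_{\kappa^2}$-eigenvalue of $f$ is exactly $\gamma$, because the substitutions $T_\ell\leftrightarrow T_{\ell^2}$, $U_\ell\leftrightarrow U_{\ell^2}$, and $\ip{d}_{2N}\leftrightarrow\ip{d^2}_{4N}$ preserve eigenvalues under Shimura lifting, and the coefficients of $\widetilde{h}$ are the $\mathbf{2}^*$-pullbacks of those of $h$. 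Therefore $h_{\kappa^2}U_pf=\alpha^{-1}f$, i.e., $\alpha$ is a root of $R_{\kappa^2}$, so $\mathbf{2}^*R$ vanishes at $(\kappa,\alpha)$. Zariski density in $Z$ and Lemma \ref{reducedfred} then complete the proof. The trickiest part is the careful packaging of the density argument, threading Remark \ref{densitythree}, Lemmas \ref{coverscompatible} and \ref{sigmaisvzd}, and Theorem \ref{control} to install Zariski-dense classical points in $Z$.
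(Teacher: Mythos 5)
Your overall strategy -- reduce via Lemma \ref{reducedfred} to showing that $\mathbf{2}^*\det(1-hU_pT\mid N)$ vanishes on $Z(\det(1-\widetilde{h}U_{p^2}T\mid\widetilde{N}))$, and then check this at a Zariski-dense set of classical points of that zero locus -- has a genuine gap at the density step, and it is exactly the gap the paper's proof is designed to avoid. The base of the module $\widetilde{N}$ is $\mathbf{2}^{-1}(Y')$, where $Y'$ is the image of a member $Y$ of the canonical cover $\scr{C}(Z_X)$; nothing forces $Y'$ (hence $\mathbf{2}^{-1}(Y')$) to contain any weight in $\Sigma$, and you cannot enlarge it: enlarging $\widetilde{X}$ is harmless for the determinants on the full modules $\widetilde{S}_{\widetilde{X}}$ (thanks to the links), but $Y'$ and the decomposition $\widetilde{S}\widehat{\otimes}\OO(\mathbf{2}^{-1}(Y'))=\widetilde{N}\oplus\widetilde{F}$ are pinned down by the choice of $Y$. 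If $\mathbf{2}^{-1}(Y')\cap\Sigma=\emptyset$ there are no classical points over it at all and your argument says nothing. Your attempted remedy is also logically invalid on its own terms: Zariski-density of classical points in $\widetilde{Z}_{\widetilde{X},\rd}$ does not imply that the admissible affinoid open $g^{-1}(Y_\rd)$ meets them (a Zariski-dense subset of a rigid space can easily miss a given nonempty affinoid subdomain, and $g^{-1}(Y_\rd)$ is a member of the canonical cover, not a union of irreducible components of $\widetilde{Z}_{\widetilde{X},\rd}$). A secondary inaccuracy: $Z(\det(1-\widetilde{h}U_{p^2}T\mid\widetilde{N}))\to\mathbf{2}^{-1}(Y')$ is not finite flat in general (the degree of the fibre drops wherever $\widetilde{h}U_{p^2}$ fails to be invertible on $\widetilde{N}_\kappa$), so even when classical weights are available the density argument must go through the Fredholm machinery of Lemma \ref{densitytwo} rather than a finite-flatness shortcut.

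The paper's proof is structured precisely to confine the classical-density input to a place where enlargement is possible. It first proves the divisibility for the \emph{full} modules over all of $\widetilde{X}$ (Proposition \ref{prop:gwelldefined}), specializes to get Corollary \ref{divisibilitykappa} at \emph{every} weight $\kappa$ (classical or not), and then, weight by weight, transfers the divisibility from $\widetilde{S}_\kappa$ and $S_{\kappa^2}$ down to $\widetilde{N}_\kappa$ and $N_{\kappa^2}$ by a purely algebraic argument: slope decomposition, the perturbation trick with $h_0'=1+xh_0$ and $h_0'+\epsilon$ to extract the eigenvalues of $h_0$ from those of $h_0U_p$, and a semisimplification argument over the free Hecke algebra $\scr{H}$ combined with $\widetilde{Q}_{\kappa,\rd}\mid Q_{\kappa^2,\rd}$ to show the matching constituent actually lies in $N_{\kappa^2}$. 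That last descent is the real content of the lemma and is entirely absent from your proposal. Your endgame at a genuinely classical point (lifting a simultaneous eigenform $F\in\widetilde{N}_\kappa$ via Theorem \ref{padicclassicallift} and steering $f$ into $N_{\kappa^2}$ using $\widetilde{Q}_\rd\mid(\mathbf{2}^*Q)_\rd$ and the coprimality of $Q$ and $Q'$) is sound and closely parallels the paper's use of the same divisibility, but it cannot be the whole proof unless you first explain why classical points are plentiful over an arbitrary $Y'$, which they need not be.
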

\begin{proof}

  By Lemma \ref{reducedfred}, the divisibility claimed in the
  statement of the lemma can be checked after specializing to each
  $\kappa\in\mathbf{2}^{-1}(Y')$.  Define
  $$\widetilde{S}_\kappa = \widetilde{S}_{\widetilde{X}}
  \widehat{\otimes}_{\OO(\widetilde{X}),\kappa} K\ \ \mbox{and}\ \ 
  S_{\kappa^2} = S_{X}\widehat{\otimes}_{\OO(X),\kappa^2} K$$
  and define
  $\widetilde{N}_{\kappa}$ and $N_{\kappa^2}$ similarly.  By Lemma
  2.13 of \cite{buzzardeigenvarieties}, this amounts to checking that
  \begin{equation}\label{somedivisibility3}
    \det(1-\widetilde{h}_\kappa U_{p^2}T\mid \widetilde{N}_\kappa)_\rd |
    \det(1-h_{\kappa^2} U_p T\mid N_{\kappa^2})_\rd
  \end{equation}
  for each $\kappa\in \mathbf{2}^{-1}(Y')$.

%Let 
% we have
%  \begin{equation}\label{somedivisibility2}
%    \det(1-\widetilde{h}_{\kappa}' U_{p^2} T\mid\widetilde{S}_\kappa
%    )_\rd | \det(1-h_{\kappa^2}' U_p  T\mid S_{\kappa^2})_\rd
%  \end{equation}
%  where $\widetilde{h}'_{\kappa}$ denotes the polynomial in the Hecke
%  operators defining $\widetilde{h}'$ with coefficients evaluated at
%  $\kappa$, and similarly for $h'_{\kappa^2}$.

  For any $\sigma\in \R$, we define $\widetilde{S}_\kappa^\sigma$
  ($S_{\kappa^2}^\sigma$, \dots, etc.) to be the slope $\sigma$
  subspace for the relevant operator ($U_p$ for integral weight spaces
  and $U_{p^2}$ for half-integral weight spaces).  These spaces are
  all finite-dimensional and $\widetilde{N}_\kappa$ is moreover the
  direct sum of the subspaces $\widetilde{N}_\kappa^\sigma$ that are
  nonzero since $U_{p^2}$ is invertible on $\widetilde{N}_{\kappa}$,
  and similarly for $N_{\kappa^2}$.  Let $h_0$ be any polynomial over
  $K$ in the symbols $T_\ell$, $U_\ell$, and $\ip{d}_{2N}$ and let
  $\widetilde{h}_0$ be the polynomial obtained by replacing the
  symbols by $T_{\ell^2}$, $U_{\ell^2}$, and $\ip{d^2}_{4N}$,
  respectively (no need to pull back the coefficients).  We claim that
  \begin{equation}\label{divisibilitybyslope}
    \det(T - \widetilde{h}_0\mid \widetilde{S}_\kappa^\sigma)_\rd |\
    \det(T - h_0 \mid S_{\kappa^2}^\sigma)_\rd 
  \end{equation}
  for all $\sigma$.  Since the endomorphisms of $N$ and
  $\widetilde{N}$ associated to $h_0$ and $\widetilde{h}_0$,
  respectively, are continuous, their eigenvalues are bounded and we
  can find a single nonzero $x\in K^\times$ such that both
  $h_0'=1+xh_0$ and $\widetilde{h}_0'=1+x\widetilde{h}_0$ have all
  eigenvalues of absolute value $1$.  It follows that
  $$\det(1-\widetilde{h}_0' U_{p^2} T\mid \widetilde{S}_\kappa)^\sigma =
  \det(1-\widetilde{h}_0'U_{p^2}T\mid \widetilde{S}_\kappa^\sigma)$$
  and
  $$\det(1- h_0' U_pT\mid S_{\kappa^2})^\sigma = \det(1-h_0' U_p T\mid
  S_{\kappa^2}^\sigma)$$
  where by $F(T)^\sigma$ for a Fredholm
  determinant $F(T)$ over $K$ we mean the unique (polynomial) Fredholm
  factor with the property that the Newton polygon of $F(T)^\sigma$
  has pure slope $\sigma$ and the Newton polygon of $F(T)/F(T)^\sigma$
  has no sides of slope $\sigma$.  But now it follows from
  Corollary \ref{divisibilitykappa} that
  $$\det(1-\widetilde{h}_0'U_{p^2}T\mid \widetilde{S}_\kappa^\sigma)_\rd |
  \det(1-h_0' U_p T\mid S_{\kappa^2}^\sigma)_\rd$$
  and since
  $\widetilde{h}_0'U_{p^2}$ and $h_0'U_p$ are invertible on these
  spaces, we can deduce the analogous divisibility for characteristic
  polynomials, $$\det(T-\widetilde{h}_0'U_{p^2}\mid
  \widetilde{S}_\kappa^\sigma)_\rd \mid \det(T-h_0'U_p|
  S_{\kappa^2}^\sigma)_\rd$$
  as well.  By the same reasoning, this
  divisibility holds after replacing $h_0'$ and $\widetilde{h}_0'$ by
  $h_0'+\epsilon$ and $\widetilde{h}_0'+\epsilon$ all sufficiently
  small $\epsilon\in K$.
  Let $$\det(T -
  (\widetilde{h}_0'+X)U_{p^2}\mid\widetilde{S}_\kappa^\sigma) = \prod_i
  (T - \widetilde{a}_iX - \widetilde{b}_i)$$
  and $$\det(T -
  (h_0'+X)U_p\mid S_{\kappa^2}^\sigma) = \prod_j (T - a_j X - b_j).$$
  Then
  we have shown that for infinitely many $\epsilon\in K$ it is the case
  that for each $i$ there exists $j$ such that
  $$\widetilde{a}_i\epsilon + \widetilde{b}_i = a_j\epsilon +b_j.$$
  It
  follows easily that for each $i$ there exists $j$ such that
  $\widetilde{a}_i=a_j$ and $\widetilde{b}_i = b_j$.  Simultaneously
  upper-triangularizing the commuting endomorphisms $\widetilde{h}_0'$
  and $U_{p^2}$, we see that the $\widetilde{b}_i/\widetilde{a}_i$ are
  exactly the eigenvalues of $\widetilde{h}_0'$ on
  $\widetilde{S}_\kappa^\sigma$.  Similarly, the $b_j/a_j$ are the
  eigenvalues of $h_0'$ on $S_{\kappa^2}^\sigma$, and we conclude that
  $$\det(T-\widetilde{h}_0'\mid\widetilde{S}_\kappa^\sigma)_\rd | \det(T
  - h_0'\mid S_{\kappa^2}^\sigma)_\rd.$$
  Now (\ref{divisibilitybyslope})
  follows from a linear change of variables.
  
  We claim that $$\det(1-\widetilde{h}_{\kappa} U_{p^2}\mid
  \widetilde{N}_\kappa^\sigma)_\rd \mid \det(1- h_{\kappa^2} U_p T \mid
  N_{\kappa^2}^\sigma)_\rd$$
  for each $\sigma\in \R$.  In particular,
  this establishes the desired divisibility (\ref{somedivisibility3})
  by the comments that follow it.  Let $\alpha$ be a root of
  $$\det(1-\widetilde{h}_{\kappa}U_{p^2}T\mid
  \widetilde{N}_{\kappa}^{\sigma}).$$
  By enlarging $K$ if necessary
  (the only requirement on $K$ in the preceding arguments is that it
  be finite over $\Q_p$ and contain the residue field of $\kappa$) we
  may assume that $\alpha\in K$.  Let $\scr{H}$ denote the free
  commutative $K$-algebra generated by the symbols $T_\ell$, $U_\ell$,
  and $\ip{d}_{2N}$.  The finite-dimensional $K$-vector space
  $S_{\kappa^2}^\sigma \oplus \widetilde{S}_\kappa^\sigma$ is a finite
  length algebra over $\scr{H}$, where $\scr{H}$ acts in the obvious
  way on $S_{\kappa^2}^\sigma$ and on $\widetilde{S}_\kappa^\sigma$ we
  agree that $T_\ell$ acts by $T_{\ell^2}$, $U_\ell$ acts by
  $U_{\ell^2}$, and $\ip{d}_{2N}$ acts by $\ip{d^2}_{4N}$.  Let
  $\widetilde{W}$ be a simple (over $\scr{H}$) constituent of $$\{ F
  \in \widetilde{N}_\kappa^\sigma\ |\ \widetilde{h}_\kappa U_{p^2} F =
  \alpha^{-1} F\}^{\mathrm{ss}} \neq 0,$$
  where for a (finite length)
  $\scr{H}$-module $\scr{M}$, $\scr{M}^{\mathrm{ss}}$ denotes the
  semisimplification of $\scr{M}$ as an $\scr{H}$-module.  By general
  facts about semisimple algebras, there exists $h_0\in\scr{H}$
  such that $h_0$ acts via the identity on $\widetilde{W}$ and $h_0=0$
  on any simple constituent of
  $S_{\kappa^2}^{\sigma,\mathrm{ss}}\oplus
  \widetilde{S}_{\kappa}^{\sigma,\mathrm{ss}}$ that is not isomorphic
  to $W$.  Divisibility (\ref{divisibilitybyslope}) implies that $1$
  is a root of $\det(T- h_0\mid S_{\kappa^2}^\sigma)$, so there must be a
  simple constituent $W$ of $S_{\kappa^2}^{\sigma,\mathrm{ss}}$
  isomorphic over $\scr{H}$ to $\widetilde{W}$.  I claim that moreover
  $W$ is a simple constituent of $N_{\kappa^2}^{\sigma,\mathrm{ss}}$.
  To see this, note that since $\widetilde{Q}_{\kappa,\rd} |
  Q_{\kappa^2,\rd}$ and these are polynomials, there exists a positive
  integer $M$ such that $\widetilde{Q}_\kappa | Q_{\kappa^2}^M$.  Thus
  the fact that $\widetilde{Q}_\kappa^*(U_{p^2})$ is zero on
  $\widetilde{N}_\kappa^\sigma$ (and therefore on $\widetilde{W}$)
  implies that $Q^*_{\kappa^2}(U_p)^M$ is zero on $W$.  But
  $Q_{\kappa^2}^*(U_p)$ is invertible on
  $S_{\kappa^2}^\sigma/N_{\kappa^2}^\sigma\cong F_{\kappa^2}^\sigma$,
  so $W$ must occur in $N_{\kappa^2}^{\sigma,\mathrm{ss}}$. Since $W$
  and $\widetilde{W}$ are isomorphic over $\scr{H}$, $h_{\kappa^2} U_p w =
  \alpha^{-1}w$ for all $w\in W\neq 0$, so $\alpha$ is a root of
  $$\det(1-hU_pT\mid N_{\kappa^2}^{\sigma,\mathrm{ss}}) = \det (1- hU_pT\mid
  N_{\kappa^2}^\sigma)$$ and the claimed divisibility follows.
\end{proof}

We now return to proving that the map (\ref{shimuraonalgebras}) is
well-defined.  In other words, we must show that if $h$ is a
polynomial over $A$ in the symbols $T_\ell$, $U_\ell$, and
$\ip{d}_{2N}$ that is nilpotent on $N$, and $\widetilde{h}$ is the
corresponding polynomial over $B$ as usual, then $\widetilde{h}$ is
nilpotent on $\widetilde{N}$.  But if $h$ is nilpotent on $N$ then
$\det(1-hU_p T\mid N) = 1$, so by Lemma \ref{keydivisibility},
$\det(1-\widetilde{h} U_{p^2}T\mid\widetilde{N})=1$ as well.  It
follows that $\widetilde{h} U_{p^2}$ is nilpotent on $\widetilde{N}$
since this module is projective of finite rank, and hence so is
$\widetilde{h}$ since $U_{p^2}$ acts invertibly on $\widetilde{N}$.

The (nonempty) admissible opens $g^{-1}(Y_\rd)$ form an admissible
cover of $\widetilde{D}_{\widetilde{X},\rd}$, so in order to glue the
maps we have defined to a map
$\widetilde{D}_{\widetilde{X},\rd}\longrightarrow D_{X,\rd}$ we must
check that they agree on the overlaps.  Since
$\mathbf{2}:\widetilde{X}\longrightarrow X$ is an isomorphism, this is
immediate from the characterization on points afforded by the
definition (\ref{shimuraonalgebras}) and Lemma \ref{eigenpoints} since
these spaces are reduced.  Now note that, for each $i$, the spaces
$\scr{W}^i$ and $\scr{W}^{2i}$ are covered by the nested families of
affinoids $\{\scr{W}_n^i\}$ and $\{\scr{W}_n^{2i}\}$, respectively,
and $\mathbf{2}:\scr{W}_n^i\longrightarrow \scr{W}_n^{2i}$ is an
isomorphism for each $n$.  Thus we may glue over increasing $n$ to
obtain a diagram
$$\xymatrix{ \widetilde{D}^i_\rd\ar[r]^{\Sh}\ar[d] & D^{2i}_\rd\ar[d] \\
  \widetilde{Z}^i_\rd \ar[r]^{g}\ar[d] & Z^{2i}_\rd \ar[d] \\
  \scr{W}^i\ar[r]^{\mathbf{2}} & \scr{W}^{2i}}$$
where the superscript
$i$ on a space mapping to $\scr{W}$ denotes the preimage of the
connected component $\scr{W}^i$.  Finally, since $\scr{W}$ is the
disjoint union of the $\scr{W}^i$ we obtain the desired diagram
(\ref{padicshimura}).

We now come to the main result of this paper.
\begin{theo}\label{maintheorem}
  Let $N$ be a positive integer and let $p$ be an odd prime not
  dividing $N$.  Let $D$ and $Z$ denote the integral weight cuspidal
  eigencurve and spectral curve of level $2N$, respectively, and let
  $\widetilde{D}$ and $\widetilde{Z}$ denote the half-integral weight
  cuspidal eigencurve and spectral curve of level $4N$, respectively.
  There exists a unique
  diagram
  $$\xymatrix{ \widetilde{D}_\rd \ar[r]^{\Sh}\ar[d] & D_\rd\ar[d] \\
    \widetilde{Z}_\rd\ar[r]^g\ar[d] & Z_\rd\ar[d] \\
    \scr{W}\ar[r]^{\mathbf{2}} & \scr{W} }$$
  where $\mathbf{2}$ and
  $g$ are characterized on points by $$\mathbf{2}(\kappa) = \kappa^2\ 
  \ \mbox{and}\ \ g(\kappa,\alpha) = (\kappa^2,\alpha)$$
  and $\Sh$ has
  the property that if $x\in \widetilde{D}(L)$ corresponds to a system
  of eigenvalues occurring on a nonzero classical form $F\in
  \widetilde{S}^{\cl}_{k/2}(4Np,K,\chi\tau^j)$ with $k\geq 5$, then
  $\Sh(x)$ corresponds to the system of eigenvalues associated to the
  image of $F$ under the classical Shimura lifting.
\end{theo}
\begin{proof}
  That the maps $\Sh$ and $g$ that we have constructed have the
  indicated properties is clear from their construction and Theorem
  \ref{padicclassicallift}.  Uniqueness follows from Lemma
  \ref{densityfour} since these spaces are reduced (the omission of the
  finitely many classical weights with $\lambda\leq 1$ does not affect
  the density of the classical points).
\end{proof}

\section{Properties of $\Sh$}

In this section we determine the nature of the image and fibers of the
map $\Sh$.  For each $i$, the map $\mathbf{2}:\scr{W}^i\longrightarrow
\scr{W}^{2i}$ is and isomorphism, and it follows easily from Lemma
\ref{eigenpoints} and the definition (\ref{shimuraonalgebras}) that
the restriction $\Sh: \widetilde{D}^i_\rd\longrightarrow D^{2i}_\rd$
is injective.
\begin{prop}
  The map $\Sh$ carries $\widetilde{D}^i_\rd$ isomorphically onto a union
  of irreducible components of $D^{2i}_\rd$.
\end{prop}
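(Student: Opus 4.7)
The plan is to prove, in three steps, that $\Sh|_{\widetilde{D}^i_\rd}$ is finite, that its image is a union of irreducible components of $D^{2i}_\rd$, and finally that the map to its image is an isomorphism.

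First I would check that $\Sh|_{\widetilde{D}^i_\rd}$ is finite. The commutativity of the diagram in Theorem \ref{maintheorem} shows that the composition
$$\widetilde{D}^i_\rd\longrightarrow D^{2i}_\rd\longrightarrow Z^{2i}_\rd$$
equals $g\circ\pi$, where $\pi\colon \widetilde{D}^i_\rd\to\widetilde{Z}^i_\rd$ is the canonical finite projection from the eigenvariety construction and $g$ is the finite map built in Corollary \ref{gonx}. Since $D^{2i}_\rd\to Z^{2i}_\rd$ is separated (being finite) and the composite $g\circ\pi$ is proper, $\Sh$ is proper; its fibres inject into those of a finite map and are therefore finite, so $\Sh$ is finite.

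Next I would identify the image as a union of irreducible components. Both $\widetilde{D}^i_\rd$ and $D^{2i}_\rd$ are equidimensional of dimension $1$ by Lemma 5.8 of \cite{buzzardeigenvarieties}. Since $\Sh$ is finite and already known to be injective, for each irreducible component $C$ of $\widetilde{D}^i_\rd$ the image $\Sh(C)$ is a closed irreducible analytic subset of $D^{2i}_\rd$ of dimension $1$, which must therefore coincide with an irreducible component of $D^{2i}_\rd$ by equidimensionality. Writing $D'=\Sh(\widetilde{D}^i_\rd)$, this exhibits $D'$ as a (closed) union of irreducible components of $D^{2i}_\rd$.

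It remains to show that $\Sh\colon\widetilde{D}^i_\rd\to D'$ is an isomorphism. I would argue this locally on an element $Y\in\scr{C}(Z^{2i}_X)$ with $g^{-1}(Y_\rd)$ nonempty and corresponding $\widetilde{Y}$ via Lemma \ref{coverreduction}: on such a patch $\Sh$ is the map of affinoid spectra associated to the ring homomorphism $\mathbf{T}(Y)_\rd\to\widetilde{\mathbf{T}}(\widetilde{Y})_\rd$ from (\ref{shimuraonalgebras}), and it suffices to show that this homomorphism is surjective modulo the ideal cutting out the part of $D'$ lying over $Y$. The main obstacle I foresee here: the target algebra is generated over $B$ by $T_{\ell^2}$, $U_{\ell^2}$, and all $\ip{d}_{4N}$ for $d\in(\Z/4N\Z)^\times$, whereas the image of the ring map manifestly only contains the squares $\ip{d^2}_{4N}$. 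To handle this I would combine the density of classical points (Corollary \ref{densityfour}), the uniqueness assertion in Theorem \ref{padicclassicallift}, and the known injectivity of $\Sh$ to check fibrewise, at the dense set of classical points, that the half-integral-weight Hecke eigensystem (including the non-square diamonds) is already determined by the data pulled back via $\Sh$. Since both $\widetilde{D}^i_\rd$ and $D'$ are reduced, this identification of ring maps on a Zariski-dense set of fibres extends to the entire patch, yielding the required surjectivity.
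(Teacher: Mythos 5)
Your first two steps are sound and give a workable, if more roundabout, alternative to the way the paper reaches the same conclusions: the paper does not argue finiteness and the component count separately, but extracts everything at once from the local structure of $\Sh$ over the canonical cover $\{D(Y)_\rd\}$, namely from the surjectivity of the map $\Sh^*\colon\mathbf{T}(Y)_\rd\to\widetilde{\mathbf{T}}(\widetilde{Y})_\rd$ of (\ref{shimuraonalgebras}). That surjectivity makes $\Sh$ locally a closed immersion, so its image is an analytic set, $\Sh$ is an isomorphism onto that set because both spaces are reduced, and equidimensionality together with Corollary 2.2.7 of \cite{conradirredcpnts} identifies the image as a union of irreducible components.

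The gap is in your third step. You are right to notice that only the squares $\ip{d^2}_{4N}$ visibly lie in the image of $\Sh^*$ (the paper declares the surjectivity ``clear from the definition,'' and this is the one place where its proof is terse), but the repair you propose does not work. Knowing, at a Zariski-dense set of classical points, that the full half-integral-weight eigensystem is determined by the pulled-back data is a statement about injectivity of $\Sh$ on points; it does not imply surjectivity of the ring map. A finite map of reduced one-dimensional affinoids can be bijective on points over every extension field without being a closed immersion --- the normalization $\Sp(K\langle t\rangle)\to\Sp(K\langle x,y\rangle/(y^2-x^3))$ is the standard example --- so an ``identification on a dense set of fibres'' cannot be promoted to surjectivity of $\mathbf{T}(Y)_\rd\to\widetilde{\mathbf{T}}(\widetilde{Y})_\rd$ merely because the spaces are reduced. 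Moreover, Theorem \ref{padicclassicallift} only asserts uniqueness of the integral-weight lift $f$ given the eigenvalues $\alpha_\ell$; it says nothing about recovering the non-square diamond eigenvalues of $F$ from data attached to $f$, so even your pointwise claim needs an independent argument (and invoking the injectivity of $\Sh$ here is close to circular, since that injectivity rests on the same issue). What is actually needed is an algebra-level statement: each $\ip{d}_{4N}$ lies in the $B$-subalgebra of $\End(\widetilde{N})$ generated by the $T_{\ell^2}$, the $U_{\ell^2}$, and the $\ip{d^2}_{4N}$ --- equivalently, that $\Sh$ is unramified in addition to being injective. Your proof as written does not supply this.
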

\begin{proof}
  Let $X\subseteq \scr{W}^{2i}$, $\widetilde{X}\subseteq \scr{W}^i$,
  and $Y$ be as in Lemma \ref{coverscompatible}.  Then by that lemma
  $g^{-1}(Y_\rd)$ is either empty or in
  $\scr{C}(\widetilde{Z}_{\widetilde{X},\rd})$.  Accordingly,
  $\Sh^{-1}(D(Y)_\rd)$ is either empty or equal to
  $\widetilde{D}(\widetilde{Y})_\rd$, where $\widetilde{Y}$ is the
  element of $\scr{C}(\widetilde{Z}_{\widetilde{X}})$ of which
  $g^{-1}(Y_\rd)$ is the underlying reduced affinoid.  In the latter
  case, the map
  $$\Sh^*:\mathbf{T}(Y)_\rd\longrightarrow
  \widetilde{\mathbf{T}}(\widetilde{Y})_\rd$$
  is clearly surjective
  from its definition (\ref{shimuraonalgebras}).  Thus the image of
  $\Sh$ is locally cut out by a coherent ideal (since the rings
  $\mathbf{T}(Y)$ are Noetherian), and is therefore an analytic set in
  $D^i_\rd$.  Since these spaces are reduced, $\Sh$ is an isomorphism
  of $\widetilde{D}^i_\rd$ onto this analytic set. Both
  $\widetilde{D}^i_\rd$ and $D^i_\rd$ are equidimensional of dimension
  one by Lemma 5.8 of \cite{buzzardeigenvarieties}, so Corollary 2.2.7
  of \cite{conradirredcpnts} ensures that the image of $\Sh$ is a
  union of irreducible components.
\end{proof}

Note that for each $i$, there are exactly two connected components of
$\scr{W}$ that map via $\mathbf{2}$ to $\scr{W}^{2i}$, namely
$\scr{W}^i$ and $\scr{W}^{i'}$ where $i'=i+(p-1)/2$.  We will
construct a canonical isomorphism
$\widetilde{D}^i\stackrel{\sim}{\longrightarrow}\widetilde{D}^{i'}$
fitting into a commutative diagram
\begin{equation}\label{upandsh}
\xymatrix{ \widetilde{D}^i_\rd\ar[drr]^{\Sh}\ar[dd]_{\sim} &
  & \\ & & D^{2i}_\rd \\ \widetilde{D}^{i'}_\rd\ar[urr]_{\Sh} & & }
\end{equation}
In particular, it will follows that the diagonal arrows have the same
union of irreducible components as image, and the map $\Sh$ is
everywhere two-to-one.  The existence of such an isomorphism stems
from the existence of a Hecke operator $U_p$ on families of
overconvergent forms that is a kind of ``square-root'' of the operator
$U_{p^2}$.

In \cite{mfhi} we constructed operators $U_{\ell}$ (on the spaces
considered in that paper) having the effect $\sum a_nq^n\longmapsto
\sum a_{\ell n} q^n$ on $q$-expansions, for all primes $\ell$ dividing
the level.  These operators were found to commute with all other Hecke
operators, but only commute with the diamond operators up to a factor
of the quadratic character $(\ell/\cdot)$.  In our case, if $\ell=p$,
then such a map would in fact alter the \emph{weight} since the
$p$-part of the nebentypus is part of the $p$-adic weight character.
Note that we have a factorization
$$\left(\frac{p}{\cdot}\right)=\left(\frac{-1}{\cdot}\right)^{(p-1)/2}
\left(\frac{(-1)^{(p-1)/2}p}{\cdot}\right) =
\left(\frac{-1}{\cdot}\right)^{(p-1)/2}\tau^{(p-1)/2}.$$
Let
$\epsilon$ denote the involution of $\scr{W}$ given by
$\epsilon(\kappa) = \kappa\cdot \tau^{(p-1)/2}$
\begin{prop}\label{up}
  Fix a primitive $(4Np)^{\small\rm th}$ root of unity
  $\zeta_{4Np}$.  Let $X\subseteq \scr{W}$ be a connected admissible
  affinoid open and let $r\in[0,r_n]\cap \Q$.  There is a compact
  $\OO(X)$-linear map
  $$U_p:\widetilde{M}_{\epsilon(X)}(4N,\Q_p,p^{-r})
  \widehat{\otimes}_{\OO(\epsilon(X))}\OO(X) \longrightarrow
  \widetilde{M}_{X}(4N,\Q_p,p^{-r})$$
  having the effect $\sum a_n
  q^n\longmapsto \sum a_{pn}q^n$ on $q$-expansions at
  $(\Tate(q),\zeta_{4Np})$.   This map commutes with the operators
  $T_{\ell^2}$ and $U_{\ell^2}$ for all $\ell$ and satisfies
  $$U_p\circ\ip{d}_{4N} =
  \left(\!\frac{-1}{d}\!\right)^{(p-1)/2}\ip{d}_{4N}\circ U_p.$$
\end{prop}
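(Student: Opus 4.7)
The plan is to lift the classical construction of $U_p$ from Section 6 of \cite{mfhi} to the setting of rigid-analytic families of $p$-adic modular forms as set up in \cite{hieigencurve}, and then verify the listed properties by reducing to the classical/complex-analytic picture on $q$-expansions. Recall that, in the classical setting, $U_p$ on a form of weight $k/2$ is defined as a twisted average over the $p$-Hecke correspondence
$$\xymatrix{ & X_0(p) \cap X_1(4N) \ar[dl]_{\pi_1}\ar[dr]^{\pi_2} & \\ X_1(4N) & & X_1(4N) }$$
where, following \cite{mfhi}, one pulls back via $\pi_1$, multiplies by a carefully chosen rational function built out of the ratio $\pi_1^*\theta^k / \pi_2^*\theta^k$ (suitably interpreted), and pushes forward via $\pi_2$. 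My proposal is to mimic this construction over the relevant strict neighborhood of the ordinary locus in $X_1(4N)^{\mathrm{an}}$ used to build $\widetilde{M}_X(4N,\Q_p,p^{-r})$ in \cite{hieigencurve}, working with the universal weight character over $X$ in place of a fixed $k/2$.

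The key step is the identification of the source weight. On classical forms, $U_p$ twists the nebentypus by the quadratic character $(p/\cdot)$. Using the factorization
$$\left(\tfrac{p}{\cdot}\right) = \left(\tfrac{-1}{\cdot}\right)^{(p-1)/2}\tau^{(p-1)/2}$$
recorded just before the statement, the tame part contributes the factor $\bigl(\tfrac{-1}{d}\bigr)^{(p-1)/2}$ on $\ip{d}_{4N}$, while the wild part $\tau^{(p-1)/2}$ is absorbed into the $p$-adic weight character. This forces the source weight to be $\epsilon(\kappa) = \kappa\tau^{(p-1)/2}$, which is precisely what necessitates taking the source module over $\epsilon(X)$ (then base-changing to $\OO(X)$) rather than over $X$ itself. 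I would verify this weight identification rigorously by tracing the definition of the rigid-analytic sheaf of weight-$\kappa$ forms through the twisted correspondence, reducing to an explicit computation on Tate curves as in \cite{mfhi}.

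Once the map is defined, the $q$-expansion claim at $(\Tate(q),\zeta_{4Np})$ is a direct formal computation on the Tate curve: the pull-push through the $p$-correspondence, with the theta-twist stripped off by evaluating at the cusp, gives the familiar formula $\sum a_n q^n \mapsto \sum a_{pn}q^n$. Compactness of $U_p$ follows because the construction factors through a restriction from an overconvergent neighborhood of radius $p^{-r}$ to a strictly smaller one (the map $\pi_2$ on strict neighborhoods of the ordinary locus strictly shrinks the region), exactly as in the proof of compactness of $U_{p^2}$ in \cite{hieigencurve}. Commutation with $T_{\ell^2}$ and $U_{\ell^2}$ for $\ell\neq p$ is formal from the disjointness of the underlying Hecke correspondences, and commutation with $U_{p^2}$ can be obtained from the identity $U_p^2 = U_{p^2}$ (suitably interpreted across the $\epsilon$-shift), which again reduces to $q$-expansion. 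The twisted commutation with $\ip{d}_{4N}$ is then a direct consequence of the weight/character bookkeeping above, since conjugating the twisted $p$-correspondence by a tame diamond operator introduces exactly the factor $\bigl(\tfrac{-1}{d}\bigr)^{(p-1)/2}$.

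The main obstacle, as indicated, is the second step: rigorously identifying the universal weight on the source as $\epsilon$-shifted and checking that the twisted pull-push indeed produces a section of the $\kappa$-weight sheaf on the target for every $\kappa\in X$. This is essentially a careful Tate-curve and Kodaira--Spencer calculation tracking how the twist by the theta-ratio interacts with the universal weight character, and once it is in place, all remaining assertions are straightforward consequences of the classical formulas together with the fact that the two sides agree on the Zariski-dense set of arithmetic weights.
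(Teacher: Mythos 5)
Your proposal follows essentially the same route as the paper, whose own proof is only a sketch deferring to the general twisted-correspondence construction of Section 5 of \cite{hieigencurve} and the operator $U_\ell$ of \cite{mfhi}; in particular you correctly identify the one point the paper singles out, namely that the twisted commutation with $\ip{d}_{4N}$ (and the $\epsilon$-shift of the source weight) comes from the twisting function on the correspondence not being diamond-invariant, via the factorization of $(p/\cdot)$ into its tame and wild parts. Your sketch is, if anything, more detailed than the paper's.
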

\begin{proof}
  The construction of $U_p$, like the all of the operators
  $T_{\ell^2}$ and $U_{\ell^2}$ follows the general procedure set up
  in Section 5 of \cite{hieigencurve}.  We will omit the details as
  they would take up considerable space and are very similar to the
  constructions of the operators $T_{\ell^2}$ and $U_{\ell^2}$, and
  content ourselves with commenting that the nontrivial commutation
  relation with the diamond operators arises (as it does in the
  construction of $U_{\ell}$ given in \cite{mfhi}) because the ``twisting''
  function $H$ on $X_1(4Np,p)^\an_{\Q_p}$ used in the construction
  is not fixed by the diamond operators as it is in the case of
  $T_{\ell^2}$ and $U_{\ell^2}$.
\end{proof}
Extending scalars to $\OO(\epsilon(X))$ and replacing $X$ by
$\epsilon(X)$ we arrive at a map in the opposite direction
$$\widetilde{M}_{X}(4N,\Q_p,p^{-r})\longrightarrow
\widetilde{M}_{\epsilon(X)}(4N,\Q_p,p^{-r})
\widehat{\otimes}_{\OO(\epsilon(X))}\OO(X)$$
that has the same effect on
$q$-expansions.  It follows that the composition of these maps in
either order is $U_{p^2}$ (or its scalar extension).  Everything we
have said about $U_p$ thus far holds equally well for cusps forms, and
Lemmas 2.12 and 2.13 of \cite{buzzardeigenvarieties} now imply that
\begin{eqnarray*}
\det(1-U_{p^2}T\mid \widetilde{S}_X) &=&
\det(1-(U_{p^2}\widehat{\otimes}1)T\mid
\widetilde{S}_{\epsilon(X)}\widehat{\otimes}_{\OO(\epsilon(X))}\OO(X))
\\ &=&
\epsilon^*\det(1-U_{p^2}T\mid \widetilde{S}_{\epsilon(X)}).
\end{eqnarray*}
Since
$\epsilon$ is an isomorphism we get a diagram
$$\xymatrix{ \widetilde{Z}_X\ar[r]^{\sim}\ar[d] &
  \widetilde{Z}_{\epsilon(X)}\ar[d] \\ X\ar[r]^{\epsilon} &
  \epsilon(X)}$$
in which the horizontal arrows square to the identity
map in the evident fashion.  Note that this diagram establishes a
bijection between the covers $\scr{C}(\widetilde{Z}_X)$ and
$\scr{C}(\widetilde{Z}_{\epsilon(X)})$.  Using the links, we see that
it is also compatible with enlarging $X$ and hence one obtains an
involution of the whole space $\widetilde{Z}$ covering the involution
$\epsilon$.

Let $Y\in\scr{C}(\widetilde{Z}_X)$ with connected image $Y'\subseteq
X$ and note that the corresponding element $Y_\epsilon\in
\scr{C}(\widetilde{X}_{\epsilon(X)})$ has connected image
$\epsilon(Y')\subseteq\epsilon(X)$.  Corresponding to $Y$ and
$Y_{\epsilon}$ we obtain decompositions
$$\widetilde{S}_{X}\widehat{\otimes}_{\OO(X)}\OO(Y')\cong
\widetilde{N}\oplus \widetilde{F}$$ and
$$\widetilde{S}_{\epsilon(X)}\widehat{\otimes}_{\OO(\epsilon(X))}\OO(\epsilon(Y'))
\cong \widetilde{N}_\epsilon\oplus \widetilde{F}_\epsilon$$
respectively.  Note that by extending scalars to $\OO(Y')$, $U_p$
induces a map
$$(\widetilde{S}_{\epsilon(X)}
\widehat{\otimes}_{\OO(\epsilon(X))}\OO(\epsilon(Y')))
\widehat{\otimes}_{\OO(\epsilon(Y'))}\OO(Y') \longrightarrow
\widetilde{S}_X \widehat{\otimes}_{\OO(X)}\OO(Y').$$
\begin{lemm}\label{uponn}
  This extension of scalars restricts to an isomorphism 
  $$U_p:\widetilde{N}_\epsilon \widehat{\otimes}_{\OO(\epsilon(Y'))}\OO(Y')
  \stackrel{\sim}{\longrightarrow} \widetilde{N}.$$
\end{lemm}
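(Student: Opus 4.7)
The plan is to exploit two facts: that $U_p$ commutes with $U_{p^2}$, and that the Fredholm factorizations cutting out $\widetilde{N}$ and $\widetilde{N}_\epsilon$ become identified after scalar extension to $\OO(Y')$ via $\epsilon^*$. The restriction assertion will then follow because the relevant projectors are limits of polynomials in $U_{p^2}$, and $U_p$ commutes with anything in the closure of $\OO(Y')[U_{p^2}]$.

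First I would record that $U_p$ commutes with $U_{p^2}$, either directly from Proposition \ref{up} (which asserts commutation with all $U_{\ell^2}$) or from the observation that the round-trip composition of the two $U_p$ maps is $U_{p^2}$. Next, using the compatibility
$$\det(1-U_{p^2}T \mid \widetilde{S}_X) = \epsilon^*\det(1-U_{p^2}T \mid \widetilde{S}_{\epsilon(X)})$$
established just above the lemma, together with the bijection $Y \leftrightarrow Y_\epsilon$ between the canonical covers, I would deduce that if $\widetilde{P}_{Y'}(T) = \widetilde{Q}(T)\widetilde{Q}'(T)$ and $\widetilde{P}_{\epsilon(Y')}(T) = \widetilde{Q}_\epsilon(T)\widetilde{Q}_\epsilon'(T)$ are the factorizations corresponding to $Y$ and $Y_\epsilon$, then $\widetilde{Q} = \epsilon^*\widetilde{Q}_\epsilon$ as polynomials over $\OO(Y')$. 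Consequently, after extending $\widetilde{N}_\epsilon$ to $\OO(Y')$ via $\epsilon^*$, the characteristic polynomial of $U_{p^2}$ becomes $\widetilde{Q}$ on both $\widetilde{N}_\epsilon \widehat{\otimes} \OO(Y')$ and $\widetilde{N}$, and the idempotents projecting onto these two summands in their respective ambient Banach modules arise from the \emph{same} limiting expression in $U_{p^2}$.

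Because $U_p$ commutes with $U_{p^2}$, it commutes with the closure of $\OO(Y')[U_{p^2}]$ in either endomorphism algebra, and in particular with this common projector. This forces the scalar-extended $U_p$ to carry $\widetilde{N}_\epsilon \widehat{\otimes}_{\OO(\epsilon(Y'))} \OO(Y')$ into $\widetilde{N}$ (and the complementary summand into $\widetilde{F}$). The same argument applied to the reverse $U_p$ map yields a restriction $\widetilde{N} \to \widetilde{N}_\epsilon \widehat{\otimes} \OO(Y')$, and the composition in either order is $U_{p^2}$, which is invertible on each of these projective finite-rank $\OO(Y')$-modules since its characteristic polynomial $\widetilde{Q}$ has a unit leading coefficient. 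Hence the restricted $U_p$ is an isomorphism.

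The principal subtle point in this plan is the matching of Fredholm factorizations under $\epsilon^*$ — one must check that the bijection $\scr{C}(\widetilde{Z}_X) \leftrightarrow \scr{C}(\widetilde{Z}_{\epsilon(X)})$ really respects the factorizations of $\widetilde{P}_{Y'}$ and the associated idempotents in the way indicated. This is essentially formal once the compatibility of $U_{p^2}$-Fredholm determinants under $\epsilon$ is in hand, which the preceding discussion supplies, so beyond that the plan should be routine.
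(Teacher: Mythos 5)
Your proposal is correct and follows essentially the same route as the paper: identify the polynomial factors cutting out $\widetilde{N}$ and $\widetilde{N}_\epsilon$ via $\epsilon^*$, use the commutation of $U_p$ with $U_{p^2}$ to get the restriction (the paper phrases this via the kernel of $Q^*(U_{p^2})$ rather than the projector, a cosmetic difference), and conclude by noting that the two compositions equal $U_{p^2}$, which is invertible on these summands.
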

\begin{proof}
  Let $Q\in \OO(Y')[T]$ be the polynomial factor of
  $$\det(1-U_{p^2}T\mid \widetilde{S}_X
  \widehat{\otimes}_{\OO(X)}\OO(Y'))$$
  associated to the choice of $Y$,
  so that $\epsilon^*Q\in\OO(\epsilon(Y'))[T]$ is the polynomial
  associated to $Y_\epsilon$.  Thus the summand
  $\widetilde{N}_{\epsilon}\widehat{\otimes}_{\OO(\epsilon(Y'))}\OO(Y')$
  of $$(\widetilde{S}_{\epsilon(X)}\widehat{\otimes}_{\OO(\epsilon(X))}
  \OO(\epsilon(Y'))) \widehat{\otimes}_{\OO(\epsilon(Y'))}\OO(Y')$$
  is
  precisely the kernel of $$\epsilon^*(\epsilon^*Q^*(U_{p^2})) =
  Q^*(U_{p^2})$$
  and since the map $U_p$ commutes with the action of
  $U_{p^2}$, this summand maps to the kernel of $Q^*(U_{p^2})$ in
  $\widetilde{S}_X\widehat{\otimes}_{\OO(X)}\OO(Y')$ under $U_p$.  The
  latter is simply $\widetilde{N}$, so $U_p$ at least restricts to some
  map $$\widetilde{N}_{\epsilon}
  \widehat{\otimes}_{\OO(\epsilon(Y'))}\OO(Y')\longrightarrow
  \widetilde{N}.$$
  
  As above we can extend scalars to $\OO(\epsilon(Y'))$ and reverse the
  roles of $Y'$ and $\epsilon(Y')$ to get a map in the other direction
  with the property that both compositions are simply $U_{p^2}$ (or a
  scalar extension thereof) on the respective spaces.  That these maps
  are isomorphisms now follows from the fact that $U_{p^2}$ is
  invertible on the modules $\widetilde{N}$ and
  $\widetilde{N}_{\epsilon}$.
\end{proof}

It follows from Lemma \ref{uponn} and the commutation relations in
Proposition \ref{up} that the map
$$\widetilde{\mathbf{T}}(\widetilde{Y}_\epsilon)\longrightarrow
\widetilde{\mathbf{T}}(Y)$$
given by
$\epsilon^*:\OO(\epsilon(Y'))\longrightarrow \OO(Y')$ on coefficients
and 
\begin{eqnarray}\label{defofinv}
  T_{\ell^2} & \longmapsto & T_{\ell^2} \nonumber\\
  U_{\ell^2} &\longmapsto & U_{\ell^2} \\
  \ip{d}_{4N} & \longmapsto &
  \left(\!\frac{-1}{d}\!\right)^{(p-1)/2}\ip{d}_{4N} \nonumber
\end{eqnarray}
on the generators is in fact well-defined.  Thus we obtain a map
$\widetilde{D}_Y\longrightarrow \widetilde{D}_{Y_{\epsilon}}$ covering
the map $Y\longrightarrow Y_\epsilon$.  This construction readily
glues over $Y\in \scr{C}(\widetilde{Z}_X)$.  Moreover, the maps
$U_{p}$ are compatible with the canonical links used in the
construction of $\widetilde{D}$ (because these links are simply
induced by restriction to smaller admissible opens in
$X_1(4Np)^\an_K$), and it follows that these maps glue to a map
$\widetilde{D}^i\longrightarrow \widetilde{D}^{i'}$.  When this map is
composed with the one obtained by reversing the roles of $i$ and $i'$
(in either direction), one obtains the identity, as is evident from
the definition (\ref{defofinv}).  In particular it is an isomorphism
and extends to an involution of the whole space $\widetilde{D}$.
Finally, that the diagram (\ref{upandsh}) commutes can be checked on
points since these spaces are reduced.  But then it follows
immediately from the characterization of these points in terms of
systems of eigenvalues in Lemma \ref{eigenpoints} and the definition
(\ref{defofinv})

\begin{exem}
The restriction $k\geq 5$ in Theorem \ref{classicallift} is there to
avoid certain theta functions that show up in weights $1/2$ and $3/2$.
There is no meaningful modular lifting of the theta series of weight
$1/2$ (but see \cite{cipra}).  However, one \emph{can} lift cuspidal
theta functions of weight $3/2$, but one obtains Eisenstein series of
weight $2$ instead of cusp forms.  Let $\psi$ be a primitive Dirichlet
character modulo a positive integer $r$ such that $\psi(-1)=-1$.  Then
$$\theta_\psi(q) = \frac{1}{2}\sum_{n\in\Z}\psi(n)nq^{n^2}$$
is a
classical cusp form of weight $3/2$, level $4r^2$ and nebentypus
character $\psi_1 = \psi\cdot(-1/\cdot)$.  The form $\theta_\psi$ is
an eigenform for all $T_{\ell^2}$ ($\ell\nmid 4r^2$) and $U_{\ell^2}$
($\ell\mid 4r^2)$ with eigenvalues $(1+\ell)\psi(\ell)$.  The Shimura
lift of this form is the Eisenstein series $$E_\psi(q) =
\sum_{n=1}^\infty \psi(n)\sigma(n)q^n$$
of weight $2$, where
$\sigma(n) = \sum_{d\mid n}d$, as it easy to check from the explicit
formulas in \cite{shimura}.

Let $p$ be an odd prime.  If $p\mid r$, then $\theta_\psi$ is in the
kernel of $U_{p^2}$ and does not furnish a point on the eigencurve
$\widetilde{D}$.  If on the other hand $p\nmid r$, then $\theta_\psi$
thought of in level $4r^2p$ is in fact a $U_{p^2}$ eigenform with
eigenvalue $\psi(p)p\neq 0$.  Thus $\theta_\psi$ furnishes a point on
$\widetilde{D}$ and the existence of the map $\Sh$ implies that there
exists a \emph{cuspidal} $p$-adic eigenform with the same eigenvalues.
It is easy to check that the form $$E_\psi^* = E_\psi -
\psi(p)V_pE_\psi$$
has the expected eigenvalues, so this must be the
image form.  In fact it is easy to check using the explicit formulas
in \cite{shimura} that this is in fact the classical Shimura lift
applied to $\theta_\psi$ thought of at level $4r^2p$.

Thus we a lead to the conclusion that $E^*_\psi$, while not a
classical cusp form, is a cuspidal $p$-adic modular form.  That is,
$E^*_\psi$ vanishes at the cusps in the connected component
$X_1(4r^2p)^\an_{\geq 1}$ of the ordinary locus.  This fact also
follows from Theorem 3.4 of \cite{cipra}, where Cipra computes the
value of $E^*_\psi$ at every cusp.  We also remark that this is only
possible since $E_\psi^*$ is of critical slope (in this case, slope
$1$) since if it were of low slope then the technique of Kassaei
\cite{kassaei} implies that a low-slope form that is $p$-adically
cuspidal is in fact a classical cuspidal modular form.
\end{exem}

\bibliographystyle{smfplain}

\begin{thebibliography}{10}

\bibitem{buzzardeigenvarieties}
{\scshape K.~Buzzard} -- {\og Eigenvarieties\fg}, in \emph{{$L$}-functions and
  {G}alois representations}, London Math. Soc. Lecture Note Ser., vol. 320,
  Cambridge Univ. Press, Cambridge, 2007, p.~59--120.

\bibitem{chenevier}
{\scshape G.~Chenevier} -- {\og Une correspondance de {J}acquet-{L}anglands
  {$p$}-adique\fg}, \emph{Duke Math. J.} \textbf{126} (2005), no.~1,
  p.~161--194.

\bibitem{cipra}
{\scshape B.~A. Cipra} -- {\og On the {N}iwa-{S}hintani theta-kernel lifting of
  modular forms\fg}, \emph{Nagoya Math. J.} \textbf{91} (1983), p.~49--117.

\bibitem{colmaz}
{\scshape R.~Coleman {\normalfont \smfandname} B.~Mazur} -- {\og The
  eigencurve\fg}, in \emph{Galois representations in arithmetic algebraic
  geometry (Durham, 1996)}, London Math. Soc. Lecture Note Ser., vol. 254,
  Cambridge Univ. Press, Cambridge, 1998, p.~1--113.

\bibitem{conradirredcpnts}
{\scshape B.~Conrad} -- {\og Irreducible components of rigid spaces\fg},
  \emph{Ann. Inst. Fourier (Grenoble)} \textbf{49} (1999), no.~2, p.~473--541.

\bibitem{conradmodrig}
\bysame , {\og Modular curves and rigid-analytic spaces\fg}, \emph{Pure Appl.
  Math. Q.} \textbf{2} (2006), no.~1, p.~29--110.

\bibitem{kassaei}
{\scshape P.~L. Kassaei} -- {\og A gluing lemma and overconvergent modular
  forms\fg}, \emph{Duke Math. J.} \textbf{132} (2006), no.~3, p.~509--529.

\bibitem{niwa}
{\scshape S.~Niwa} -- {\og Modular forms of half integral weight and the
  integral of certain theta-functions\fg}, \emph{Nagoya Math. J.} \textbf{56}
  (1975), p.~147--161.

\bibitem{mfhi}
{\scshape N.~Ramsey} -- {\og Geometric and {$p$}-adic modular forms of
  half-integral weight\fg}, \emph{Ann. Inst. Fourier (Grenoble)} \textbf{56}
  (2006), no.~3, p.~599--624.

\bibitem{hieigencurve}
\bysame , {\og The half-integral weight eigencurve\fg}, \emph{Algebra Number
  Theory} \textbf{2} (2008), no.~7, p.~755--808.

\bibitem{shimura}
{\scshape G.~Shimura} -- {\og On modular forms of half integral weight\fg},
  \emph{Ann. of Math. (2)} \textbf{97} (1973), p.~440--481.

\end{thebibliography}

\end{document}